\newcommand{\Ceers}{\textbf{Ceers}\xspace}
\providecommand{\Z}{\mathbb{Z}}
\providecommand{\sugs}{{$*$-universal groups}\xspace}
\providecommand{\sug}{{$*$-universal group}\xspace}
\newtheorem{question}{Question}
\newtheorem{convention}{Convention}
\newtheorem{thm}{Theorem}[section]
\newtheorem{definition}[thm]{Definition}
\newtheorem{remark}[thm]{Remark}
\newtheorem{lemma}[thm]{Lemma}
\newtheorem{prop}[thm]{Proposition}
\newtheorem{cory}[thm]{Corollary}
\newenvironment{defn}{\begin{definition} \rm}{ \end{definition}}
\newtheorem{lem}[thm]{Lemma}
\renewcommand{\phi}{\varphi}
\renewcommand{\setminus}{\smallsetminus}
\title[Algorithmically finite, universal, and $*$-universal groups]{Algorithmically finite, universal, and $*$-universal groups}
\author[U.~Andrews]{Uri Andrews}
\address{Department of Mathematics\\
University of Wisconsin\\
Madison, WI 53706-1388\\
USA}
\email{\href{mailto:andrews@math.wisc.edu}{andrews@math.wisc.edu}}
\author[M.C.~Ho]{Meng-Che ``Turbo'' Ho}
\address{Department of  Mathematics, California State University, Northridge}
\email{\href{mailto:turbo.ho@csun.edu}{turbo.ho@csun.edu}}
\keywords{}
\thanks{The second author acknowledges support from the National Science Foundation under Grant No.\ \mbox{DMS-2054558}. The authors would like to thank the CSU Desert Studies Center for supporting this project.}
\begin{document}

\begin{abstract}
The study of the word problems of groups dates back to Dehn in 1911, and has been a central topic of study in both group theory and computability theory. As most naturally occurring presentations of groups are recursive, their word problems can be thought of as a computably enumerable equivalence relation (ceer). In this paper, we study the word problem of groups in the framework of ceer degrees, introducing a new metric with which to study word problems. This metric is more refined than the classical context of Turing degrees. 

Classically, every Turing degree is realized as the word problem of some c.e.\ group, but this is not true for ceer degrees. This motivates us to look at the classical constructions and show that there is a group whose word problem is not universal, but becomes universal after taking any nontrivial free product, which we call $*$-universal. This shows that existing constructions of the Higman embedding theorem do not preserve ceer degrees. We also study the index set of various classes of groups defined by their properties as a ceer: groups whose word problems are dark (equivalently, algorithmically finite as defined by Miasnikov and Osin), universal, and \sugs. 

\end{abstract}

\maketitle

\section{Introduction}

The study of algorithmic properties of groups originated with Dehn \cite{De11} in 1911. Dehn introduced the notion of the word problem of a group, and asked if a recursively presented (or even finitely presented group) might have a non-solvable word problem. Novikov \cite{No55} and Boone \cite{Bo57} gave examples of groups which are finitely presented yet have non-solvable word problem.

A modern solution to Dehn's question uses the Higman embedding theorem \cite{Hi61}. Higman embedding states that any recursively presented group can be effectively embedded into a finitely presented group. Using this, if you want a finitely presented group with non-solvable word problem, you first construct a recursively presented group $G$ with non-solvable word problem then embed $G$ into a finitely presented $H$. Since the word problem of $G$ reduces to that of $H$, the word problem of $H$ is non-solvable as well. 

For any fixed non-computable c.e.\ set $A$, one can let $G_A$ be the group with presentation $\langle \{g_i : i\in\omega\} \mid \{g_i^2=1: i\in \omega\}\cup \{g_i=1 : i\in A\}\rangle$, and $H_A$ be the finitely presented group given by the Higman embedding theorem. Clapham \cite{Cl64} later noted that the Higman embedding can be performed so that the Turing degree of the word problem of $H_A$ is the same as the Turing degree of the word problem of $G_A$. Putting this together, the degrees of word problems of finitely presented groups are exactly the c.e.\ Turing degrees (see also \cite{Fr62, Bo66, Bo67}). This seemed to be a somewhat complete answer to the question of the complexity of word problems, but we argue that this is an incomplete picture.

We introduce another metric with which to study word problems: The degree structure of Ceers. Given a recursively presented group with computable generating set $X$, we define the word problem $W_G$ of $G$ to be an equivalence relation of the set of words in $X$ given by $w \mathrel{W_G} v$ if and only if $w=_G v$. That is, two words are equivalent if they represent the same element of the group. By fixing a bijection between the words in $X$ and $\omega$, we can consider $W_G$ to be an equivalence relation on $\omega$. Further, observe that $W_G$ is a c.e.\ set. So, $W_G$ is a c.e.\ equivalence relation (ceer).

\begin{defn}
	For two equvalence relations $E$ and $R$ on $\omega$, we say that $E$ is computably reducible to $R$ (written $E\leq R$) if there is a computable function $f:\omega \rightarrow \omega$ so that $\forall n,m \left( n\mathrel{E}m \leftrightarrow f(n) \mathrel{R} f(m) \right)$

	Let \Ceers be the degree structure given by the collection of c.e.\ equivalence relations under the partial order $\leq$.
\end{defn}

We pose that the structure \Ceers is the right setting to measure complexity of word problems $W_G$ of recursively presented groups $G$, rather than the structure of Turing degrees. Here, the narrative is far subtler and more interesting than in the Turing degree setting. For one, not every degree in \Ceers contains the word problem of a group \cite{GaoGerdes}. Further, there are degrees that contain the word problem of groups, but not of finitely generated groups \cite{De23}\footnote{\cite{De23} defined the class of hyperdark ceer degrees and showed that they cannot be realized as the word problem of a finitely generated algebra of finite type. They then proceed to construct a recursively presented semigroup with a hyperdark word problem. Using the same idea, one can also construct a recursively presented group with a hyperdark word problm.}.

Further, there are natural subclasses of \Ceers that correspond to interesting properties of groups.

\begin{defn}[Miasnikov--Osin \cite{MO}]\label{def:alg-fin}
A (finitely generated) group $G$ is \emph{algorithmically finite} if there is no infinite c.e.\ set of words whose natural image in $G$ consists of pairwise distinct elements. 
\end{defn} 

This same notion (when $G$ is infinite) was studied in the setting of \Ceers:

\begin{defn}[Andrews--Sorbi \cite{An19}]\label{def:dark}
	A ceer $E$ is \emph{dark} if it has infinitely many classes, yet whenever $W$ is an infinite c.e.\ set, there are $a,b\in W$ so that $a\mathrel{E}b$.
\end{defn}

Miasnikov and Osin \cite{MO} and Khoussainov and Miasnikov \cite{Kh14} gave constructions of finitely generated recursively presented groups which are algorithmically finite (i.e., have dark word problem).

In this paper, we show that there is a rich collection of algorithmically finite groups. In fact, in Section \ref{sec:dark}, we show that the set of recursive presentations of finitely generated groups which are algorithmically finite is a $\Pi^0_3$-complete set.

Next, we change gears to understanding the complexity of having word problem in a certain degree. Boone and Rogers \cite{Bo66a} showed that the set of finite presentations of groups which have solvable word problem is a $\Sigma^0_3$-complete set. As a corollary of this result, they show that there is no universal algorithm for solving the word problem of every finitely presented group with solvable word problem. A modern approach would note that the $\Sigma^0_3$-completeness also follows from the construction sending a c.e.\ set $A$ to $H_A$ as above. We note that this same construction shows that the set of finitely presented groups whose word problems are Turing equivalent to $0'$ is a $\Sigma^0_4$-complete set. 

There is a single largest degree in \Ceers, which we call the \emph{universal degree}. We say that a group $G$ has universal word problem if $W_G$ is in the universal ceer degree. Analogously to considering finitely presented groups of Turing degree $0'$, we consider the collection of finite presentations of groups $G$ so that $W_G$ is a universal ceer. We show that this is a $\Sigma^0_3$-complete set in Section \ref{sec:universal}.

We prove the $\Sigma^0_3$-hardness of this set using Clapham's result that we can embed a recursively presented group $G$ into a finitely presented group $H$ with the same Turing degree. In other words, we give a reduction of $(\Sigma^0_3,\Pi^0_3)$ to (universal word problem, word problem of lower Turing degree than universal). Given that a primary thesis in this paper is that the structure \Ceers, being more refined than the structure of Turing degrees, is the right place to study word problems, having to rely on Turing degree differences here is unsatisfying.

This causes us to ask if some form of Clapham's theorem could be true in ceers. Perhaps if $G$ is a recursively presented group which is non-universal, then $G$ effectively embeds in a finitely presented group $H$ whose word problem is also non-universal. While we do not fully resolve this question, we show that no construction similar to the Higman embedding construction can possibly work. In particular, Higman embedding is based on using free products and HNN extensions as introduced by Higman, Neuman, and Neuman \cite{Hi49}. Ostensibly, free products should be the simpler of these two constructions.
In Section \ref{sec:sugs}, we construct a recursively presented group $G$ which has non-universal word problem, yet any non-trivial free product of $G$ has universal word problem. We call such groups \sugs.

In Section \ref{sec:direct-prod}, we then show that although the free product of non-universal groups may be universal, and indeed \sugs exist, cross-products do not suffer this fate. If $G$ and $H$ have non-universal word problem, then $G\times H$ has non-universal word problem. Even infinite sums have this property: If $G_i$ is a uniform sequence of groups with non-universal word problems, then $\oplus_i G_i$ still has non-universal word problem.

We use this to show that even among the groups whose word problems have Turing degree $0'$, the property of universality is $\Sigma^0_3$-complete.

Finally, in Section \ref{sec:index-of-sugs}, we show that the collection of \sugs is d-$\Sigma^0_3$-complete.


%
%
%

\subsubsection*{Remark on finite generation:}
	
	When we can, we prove results about finitely presented groups. For finitely presented, or even finitely generated, groups, the ceer degree of the word problem does not depend on the presentation. 
	
	\begin{prop}
		Suppose $G = \langle S\mid R \rangle = \langle S'\mid R'\rangle$ such that $S$ and $S'$ are both finite. Then the word problem of $G$ with respect to $S$ and $S'$ are of the same degree (ceer degree if both $R$ and $R'$ are c.e.). 
	\end{prop}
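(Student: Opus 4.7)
The plan is a standard translation-of-generators argument, exploiting crucially that $S$ and $S'$ are both finite (so the translation tables themselves are finite data). First, for each $s \in S \cup S^{-1}$, fix a word $\tau(s)$ in the alphabet $S' \cup (S')^{-1}$ that represents the same element of $G$; such a $\tau(s)$ exists because $S'$ generates $G$, and since $S \cup S^{-1}$ is finite, the map $\tau$ is just a finite lookup table, hence computable. Symmetrically fix $\sigma : S' \cup (S')^{-1} \to (S \cup S^{-1})^*$.

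Next, extend $\tau$ to a function $\hat\tau$ on words by concatenation: $\hat\tau(s_1 \cdots s_n) = \tau(s_1) \cdots \tau(s_n)$. Under the fixed computable bijection between words and $\omega$, this is a computable function $\omega \to \omega$. Define $\hat\sigma$ analogously. The verification that $\hat\tau$ computably reduces the word problem with respect to $S$ to the word problem with respect to $S'$ is then the core step: for any words $w, v$ over $S$, the image $\hat\tau(w)$ represents in $G$ the same element as $w$ does (because concatenation of words corresponds to multiplication in $G$, and each $\tau(s) =_G s$), and likewise for $v$. Therefore $w =_G v$ if and only if $\hat\tau(w) =_G \hat\tau(v)$. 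The symmetric argument using $\hat\sigma$ gives the reduction in the other direction.

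For the parenthetical ceer claim, one observes that the reductions $\hat\tau, \hat\sigma$ are computable independently of whether $R, R'$ are c.e.\ or merely r.e.\ presentations; the only thing one uses about the relators is that they determine the same group $G$, which is given. Hence the two word problems are computably equivalent as equivalence relations, and if both are ceers they sit in the same ceer degree.

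The only potential obstacle is bookkeeping: one must be careful that ``word problem'' is formalized on the free monoid on $S \cup S^{-1}$ (so that inverses are part of the alphabet) and that the chosen bijection with $\omega$ is computable — but these are fixed at the outset. Once this is set up, the argument is essentially the standard observation that a finite change of generating set induces a computable bi-translation, and hence a computable equivalence of word problems. No deeper group-theoretic ingredient beyond ``$S'$ generates $G$'' is required.
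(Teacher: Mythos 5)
Your proof is correct and is essentially the paper's argument: fix a finite (hence computable, chosen non-uniformly) translation table sending each generator of one presentation to a word in the other representing the same group element, extend by concatenation, and observe this gives a computable reduction; the symmetric map gives the reverse reduction. The extra bookkeeping you supply (inverses in the alphabet, the bijection with $\omega$, independence from the complexity of $R$ and $R'$) is all consistent with what the paper leaves implicit.
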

	\begin{proof}
		Fix (non-uniformly) a representative for each $s' \in S'$ as a word in $S$. This induces a computable map from $(S')^*\to S^*$, which is a reduction from the word problem of $G$ with respect to $S$ to the word problem of $G$ with respect to $S'$.
	\end{proof}
	
	Note that for infinitely generated groups, the ceer degree does depend on the presentation. For instance, for every co-infinite c.e.\ set $A$, the recursive presentation of the group $G_A=\langle g_i \mid \{g_i^2=1\mid i\in \omega\}\cup \{g_i=1\mid i\in A\}\rangle$ gives a group isomorphic to $\oplus_i \Z/2\Z$, yet the ceer degree of $G_A$, and even the Turing degree of $G_A$, depend on the set $A$.

\subsubsection*{Remark on indexing and index sets:}
	There is a computable enumeration of all ceers $(E_i)_{i\in \omega}$, a computable enumeration of all finite presentations of groups, and a computable enumeration of all recursive presentations of groups. Further, these enumerations have universality properties so that, given a computable enumeration of an equivalence relation, one can effectively find an index in our enumeration of an equivalence relation in the same ceer degree, and in fact the same equivalence relation on $\omega$. Similarly for the enumerations of group presentations. We implicitly use these enumerations when we discuss the index set of ceers with some property or the index set of recursive presentations of groups with some property.

\section{The index set of darkness}\label{sec:dark}

\subsection{Algebra preliminaries}

Following \cite[\S 4.1]{Kh14}, we first recall some definitions and basic facts about the polynomial ring as an algebra and the Golod--Shafarevich theorem.

Let $(R, +, \cdot)$ be a ring and $K$ a field. recall that $R$ is called an \emph{(associative) algebra over $K$} if there is a \emph{scalar multiplication} function $*: K\times R \to R$ so that $(R, +, *)$ is a vector space over $K$ and $k*(r\cdot s) = (k*r)\cdot s = r\cdot (k*s)$. We will abuse notation and write both the ring multiplication and scalar multiplication as $\cdot$. 

Let $p$ be a prime. We will consider the \emph{non-commutative} polynomial ring $F = (\Z/p\Z)[x,y]$ as an algebra over the field $\Z/p\Z$. A polynomial is \emph{homogeneous} if every term in it has the same degree. Every polynomial can be written as a sum of homogeneous polynomials called its \emph{homogeneous components}. Let $F_k$ be the subspace consisting of all homogeneous polynomials of degree $k$ and 0, we have that $F$ is a direct sum (as vector spaces) of $F_k$, namely $F = \bigoplus\limits_{k\in \omega} F_k$. 

A subset of $F$ is called \emph{homogeneous} if every element of it is homogeneous, and an ideal of $F$ is called \emph{homogeneous} if it can be generated by a homogeneous set. Let $I$ be a homogeneous ideal, then a polynomial is in $I$ if each of its homogeneous components is in $I$. As a consequence, we have $F/I = \bigoplus (F_k+I)/I$. Also note that $F_k$ and hence $(F_k+I)/I$ are finite, so $F/I$ is computable.

Golod and Shararevich \cite{Go64} gave various conditions under which $F/I$ has infinite dimension. We will need the following variation. The condition on $n_k$ will be referred to as the \emph{Golod--Shafarevich condition} for the rest of this section. 

\begin{thm}[Golod--Shafarevich theorem {\cite[Theorem 4.3]{Kh14}}]
Let $I$ be a homogeneous ideal generated by a homogeneous set $H$, and $n_k$ be the number of homogeneous polynomials of degree $k$ in $H$. Let $0 < \epsilon \le 1$. If $n_0 = n_1 = 0$ and for every $k \ge 2$ we have 
$$n_k \le \epsilon^2 (2-2\epsilon)^{k-2},$$ 
then the dimension (as a $\Z/p\Z$ vector space) of $A = F/I$ is infinite. 
\end{thm}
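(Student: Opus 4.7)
The plan is to apply the standard Golod--Shafarevich Hilbert-series argument, specializing the resulting coefficient-wise inequality at a judiciously chosen value of $t$. Set $a_k = \dim_{\Z/p\Z}(F_k + I)/I$ and form the formal power series $A(t) = \sum_{k \ge 0} a_k t^k$ and $H(t) = \sum_{k \ge 2} n_k t^k$. Since $F = (\Z/p\Z)[x,y]$ is the free associative algebra on two letters, $\dim F_k = 2^k$ and $a_0 = 1$. The goal is to show that $a_k \ne 0$ for infinitely many $k$, so that $\dim A = \sum_k a_k$ is infinite.

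The first step is the Golod--Shafarevich coefficient-wise inequality
$$A(t)\bigl(1 - 2t + H(t)\bigr) \ge 1,$$
equivalently the recursion $a_k \ge 2 a_{k-1} - \sum_{j=2}^{k} n_j\, a_{k-j}$ for $k \ge 1$. One proves it by analyzing $V_k = F_k \cap I$: separating any sum $\sum f_i h_i g_i \in I$ (with $h_i \in H$) according to whether $\deg f_i \ge 1$ or $\deg f_i = 0$ yields $V_k = F_1 V_{k-1} + \sum_{j \ge 2} H_j F_{k-j}$, where $H_j$ denotes the degree-$j$ part of $H$. Combining this with the identification $F_k / F_1 V_{k-1} \cong F_1 \otimes A_{k-1}$ (which uses freeness of $F$) and a dimension count through the short exact sequence $0 \to V_k/F_1 V_{k-1} \to F_k / F_1 V_{k-1} \to A_k \to 0$ produces the displayed bound.

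Next I would evaluate at $t_0 = 1/(2 - \epsilon)$, motivated by the shape of the hypothesis. For $0 < \epsilon \le 1$ one has $(2 - 2\epsilon) t_0 < 1$, so the Golod--Shafarevich condition gives
$$H(t_0) \le \sum_{k \ge 2} \epsilon^2 (2 - 2\epsilon)^{k-2} t_0^k = \frac{\epsilon^2 t_0^2}{1 - (2-2\epsilon) t_0}.$$
The algebraic identity $(1 - 2t)\bigl(1 - (2 - 2\epsilon)t\bigr) + \epsilon^2 t^2 = \bigl(1 - (2-\epsilon)t\bigr)^2$, whose right side vanishes at $t_0$, then forces $1 - 2t_0 + H(t_0) \le 0$. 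Were $A$ finite-dimensional, $A(t)$ would be a polynomial, so $A(t_0) < \infty$; specializing the coefficient-wise inequality at $t_0$ would then yield $A(t_0)\bigl(1 - 2t_0 + H(t_0)\bigr) \ge 1$. But the left side is the product of a nonnegative number and a nonpositive number, giving the required contradiction.

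The main obstacle is the Golod--Shafarevich inequality of the second paragraph: the choice of $t_0$, the algebraic identity, and the final contradiction step are all mechanical, but setting up the recursion requires the careful inductive analysis of how $I$ is built from its generators together with the essential use of freeness of $F$ to convert that analysis into a clean generating-function bound.
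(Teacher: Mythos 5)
The paper does not prove this theorem (it is quoted from \cite[Theorem 4.3]{Kh14}), so your proposal has to stand on its own. The analytic half does: the evaluation point $t_0 = 1/(2-\epsilon)$, the identity $(1-2t)\bigl(1-(2-2\epsilon)t\bigr)+\epsilon^2 t^2 = \bigl(1-(2-\epsilon)t\bigr)^2$, and the contradiction $1 \le A(t_0)\bigl(1-2t_0+H(t_0)\bigr) \le 0$ are all correct (note $(2-2\epsilon)t_0<1$, so $H(t_0)$ converges, and the Cauchy product is legitimate once $A(t)$ is assumed to be a polynomial).

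The gap is in the combinatorial half. The decomposition $V_k = F_1 V_{k-1} + \sum_{j\ge 2} H_j F_{k-j}$ is correct, and the exact sequence gives $a_k = 2a_{k-1} - \dim\bigl(V_k/F_1V_{k-1}\bigr)$; but the dimension count you describe bounds $\dim\bigl(V_k/F_1V_{k-1}\bigr)$ by $\sum_j n_j \dim F_{k-j} = \sum_j n_j\, 2^{k-j}$, not by $\sum_j n_j\, a_{k-j}$. The resulting recursion $a_k \ge 2a_{k-1} - \sum_j n_j 2^{k-j}$ is strictly weaker and does not prove the theorem: in generating-function form it gives $A(t)(1-2t)^2 \ge (1-2t) - H(t)$, whose left side is nonnegative for every $t$, so no choice of $t_0$ produces a contradiction with $A(t)$ being a polynomial. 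The missing idea is the standard refinement: since each $h\in H_j$ has degree $j\ge 2$, one has $H_j V_{k-j} \subseteq F_j V_{k-j} = F_1 F_{j-1} V_{k-j} \subseteq F_1 V_{k-1}$, so modulo $F_1 V_{k-1}$ the subspace $H_j F_{k-j}$ is spanned by $H_j T_{k-j}$ for any vector-space complement $T_{k-j}$ of $V_{k-j}$ in $F_{k-j}$, and $\dim T_{k-j} = a_{k-j}$. Only with this observation does one obtain $\dim\bigl(V_k/F_1V_{k-1}\bigr) \le \sum_j n_j a_{k-j}$ and hence the recursion $a_k \ge 2a_{k-1} - \sum_j n_j a_{k-j}$ that your evaluation argument actually uses. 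With that step inserted, the proof is complete.
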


\subsection{Rings} 

In \cite{Kh14}, various algorithmically finite algebraic structures are constructed. The definition is the same as saying that the word problem of said (infinite) structure is \emph{dark}, which we define here (see also Definition \ref{def:alg-fin} and \ref{def:dark}).

\begin{definition}
A ceer $E$ is \emph{light} if there is some infinite c.e.\ set $S$ so that $i\cancel{\mathrel{E}}j$ for any $i\neq j$ from $S$.

A ceer $E$ is \emph{dark} if it has infinitely many classes and is not light. 

We say a computable algebraic structure (for instance, a ring or a group) is \emph{dark} if its word problem is dark as a ceer. 
\end{definition}

In the following theorem, we follow the strategy from Khoussainov--Miasnikov \cite{Kh14} where they construct a residually finite group with a dark word problem.

\begin{thm}
The index set of finitely generated computable ring presentations whose word problem is dark is $\Pi^0_3$-complete.
\end{thm}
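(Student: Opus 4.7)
The plan is to show the $\Pi^0_3$ upper bound and then reduce a standard $\Pi^0_3$-complete set to the index set, using a permission-controlled parameterization of the Khoussainov--Miasnikov construction.

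For the upper bound, darkness of the word problem ceer $E_e$ is the conjunction of having infinitely many classes and admitting no infinite c.e.\ pairwise inequivalent set. The first condition is $\Pi^0_3$: the set of minimum class representatives $\{x : \forall y < x,\ \neg (y\, E_e\, x)\}$ is uniformly $\Pi^0_1$ in $e$, and infiniteness of a $\Pi^0_1$ set is $\Pi^0_3$ (its negation, finiteness, being $\Sigma^0_3$, dually to cofiniteness of a c.e.\ set). The second is also $\Pi^0_3$: for every $m$, $W_m$ is either finite ($\Sigma^0_2$) or contains an $E_e$-equivalent pair ($\Sigma^0_1$), giving $\forall m\,\Sigma^0_2=\Pi^0_3$. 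So the index set sits inside $\Pi^0_3$.

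For hardness, I would fix a $\Pi^0_3$-complete set of the standard form $P=\{e:\forall n,\ W_{g(e,n)}\text{ is infinite}\}$ for a total computable $g$, and produce uniformly in $e$ a finitely generated computable presentation of a ring $R_e=F/I_e$ with $F=(\Z/p\Z)[x,y]$ whose word problem is dark iff $e\in P$. Following the Khoussainov--Miasnikov strategy, enumerate a homogeneous set $H_e$ in stages, generating $I_e$, while maintaining the Golod--Shafarevich bound $|H_e\cap F_k|\le \epsilon^2(2-2\epsilon)^{k-2}$ at every $k$; this guarantees that $R_e$ is infinite-dimensional and has infinitely many classes regardless of $e$. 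For each $n$ I would introduce a designer infinite c.e.\ sequence $V_n=\{v^n_0,v^n_1,\ldots\}$ of homogeneous words placed in a dedicated, sparse range of degrees, together with a requirement $\sigma_n$ that attempts to collapse $V_n$: each time a new element enters $W_{g(e,n)}$, $\sigma_n$ adds to $H_e$ a homogeneous relation $v^n_k-v^n_0$ for the next unused $k$. In parallel I run the Khoussainov--Miasnikov-style diagonalization against every other c.e.\ set of words, allocating each requirement a disjoint block of the per-degree Golod--Shafarevich quota. If $e\in P$, every $\sigma_n$ receives infinitely many permissions and collapses its $V_n$ entirely, while the background diagonalization handles every other c.e.\ set, so $R_e$ is dark. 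If $e\notin P$, some $W_{g(e,n_0)}$ is finite, so cofinitely many $v^{n_0}_k$ are never merged by $\sigma_{n_0}$ and $\{v^{n_0}_k\}_{k\text{ large}}$ remains an infinite pairwise-inequivalent c.e.\ set, witnessing that $R_e$ is light.

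The hard part will be isolating the designer sequences $V_n$ from the rest of the machinery: since $I_e$ is an ideal, a homogeneous relation added by another requirement can in principle, via multiplication by arbitrary monomials on the left and right, produce an element of the form $v^{n_0}_k-v^{n_0}_j$ and accidentally equate two elements of $V_{n_0}$, collapsing the intended witness to lightness. I would address this by choosing each $v^n_k$ as a carefully structured monomial in a disjoint, widely separated range of degrees reserved for $\sigma_n$, and by arranging that the homogeneous relations added by all other requirements have monomial supports disjoint from the $v^n_k$'s, so that no two-sided product of such a relation with an element of $F$ can ever equal $v^n_k-v^n_j$ with $k\neq j$. Combined with a careful partition of the Golod--Shafarevich budget among the requirements so that the bound $|H_e\cap F_k|\le\epsilon^2(2-2\epsilon)^{k-2}$ is preserved throughout, this bookkeeping yields the ``dark iff $e\in P$'' correspondence and hence the desired $\Pi^0_3$-completeness.
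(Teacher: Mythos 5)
Your $\Pi^0_3$ upper bound is fine (and in fact more careful than the paper's, which only spells out the ``not light'' half). The hardness direction, however, has two serious problems. First, the set $P=\{e:\forall n,\ W_{g(e,n)}\text{ is infinite}\}$ is not $\Pi^0_3$-complete: ``$W$ is infinite'' is $\Pi^0_2$, and a universal number quantifier over a $\Pi^0_2$ matrix is still $\Pi^0_2$. The correct $\Pi^0_3$-complete normal form is $\forall n\,(W_{g(e,n)}\text{ is \emph{finite}})$, since finiteness is $\Sigma^0_2$-complete. This is not a cosmetic slip, because your entire architecture is wired to the wrong polarity: you want infinitely many permissions to produce collapse (darkness) and finitely many to leave a light witness, whereas with the correct normal form the $\Pi^0_3$ outcome is ``all columns finite,'' so finitely many permissions must yield darkness and an infinite column must yield lightness. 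The paper's construction is the mirror image of yours for exactly this reason: the permissions from the $n$-th column drive a \emph{lightness} requirement $L_n$ that enumerates a transversal $T_n$ of fresh monomials in large degrees, while the darkness requirements $D_m$ run unconditionally in the background.

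Second, even granting the normal form, your isolation scheme defeats itself. To be dark, $R_e$ must collapse a pair inside \emph{every} infinite c.e.\ set, including sets woven through your designer blocks, e.g.\ the c.e.\ set $\{v^n_0: n\in\omega\}$. If, as you propose, all non-$\sigma$ relations have supports (and ideal closures) disjoint from the $v^n_k$, then no requirement ever identifies elements lying in distinct blocks, so $\{v^n_0:n\in\omega\}$ is an infinite c.e.\ transversal in every outcome and $R_e$ is always light. If instead you relax the isolation so the background can touch the $v^n_k$, then two background identifications $v^{n_0}_5\sim u\sim v^{n_0}_7$ can collapse a pair inside $V_{n_0}$ and destroy your light witness in the $e\notin P$ case. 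This tension between ``collapse something in every infinite c.e.\ set'' and ``keep one designated infinite c.e.\ set pairwise distinct'' is the heart of the theorem, and a static partition of supports and of the Golod--Shafarevich budget cannot resolve it. The paper resolves it with a finite-injury priority argument: each $L_n$ protects only the finitely many degrees of the monomials it has so far placed into $T_n$ against \emph{lower}-priority $D_m$'s; each $D_m$ acts at most once, by waiting until two words of $W_m$ agree modulo the (finite) quotient by all degrees above the current restraint and then enumerating the homogeneous components of their difference, all of degree above every protected degree; and an $L_n$ injured by a higher-priority $D_m$ simply restarts $T_n$ from scratch, which happens only finitely often. You would need to replace your bookkeeping with some such priority mechanism for the reduction to go through.
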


\begin{proof}
It is well-known that every c.e.\ ring presentation is isomorphic to a computable presentation via the \emph{padding trick}. Indeed, if $\langle S \mid r_1, r_2, \cdots \rangle$ is a c.e.\ ring presentation where $r_i$ gets enumerated at stage $s_i$, then $\langle S \mid r_1+s_11-s_11, r_2+s_21-s_21, \cdots\rangle$ is a computable presentation that defines an isomorphic ring. Thus, it suffices to construct c.e.\ presentations in this proof.

The index set of finitely generated computable ring presentations whose word problem is dark is $\Pi^0_3$ as it can be described by $\forall m, (W_m\text{ is infinite}) \implies (\exists i,j\in W_m, iEj$). Note that since $iEj$ is $\Sigma^0_1$,  
$\exists i,j\in W_m, iEj$ is $\Sigma^0_1$. The complexity lies in determining if $W_m$ is infinite, which is $\Pi^0_2$.

Given a $\Pi^0_3$ set $S$, we can effectively fix a sequence of c.e.\ sets $U_i$ such that $i\in S$ iff for every $n$, $U_i^{[n]}$, the $n$-th column of $U_i$, is finite. We will build a c.e.\ ring presentation $A_i = (\Z/p\Z)[x,y]/I_i$ such that its word problem is dark iff $i\in S$. Note that $(\Z/p\Z)[x,y]$ has a c.e.\ presentation, so it suffices to find a c.e.\ generating set $H_i$ of $I_i$. The $H_i$ we build will only contain homogeneous polynomials and satisfy the Golod--Shafarevich condition. We will start with $H_i = \emptyset$. In order to not overburden notation with subscripts, from here on we will suppress the subscript $i$, referring instead to $H$ as we describe a uniform construction producing an $H_i$ for each $i$. Below, when we refer to $H_s$, this refers to the part of the set $H$ which is enumerated by stage $s$.

We have the following list of requirements:

\begin{itemize}
	\item[$L_n$:] If $U^{[n]}$ is infinite, construct an infinite c.e.\ subset $T_n \subseteq (\Z/p\Z)[x,y]$ such that every two words in $T_n$ are not equal in $A$. 
	\item[$D_m$:] Ensure there are two words $u,v$ in $W_m$ which are equal in $A$ whenever $W_m$ is infinite.
\end{itemize} 


Of course, the $D_m$-requirements contradict the $L_n$-requirements if some $U^{[n]}$ is infinite. If $n$ is least so that $U^{[n]}$ is infinite, we will ensure that the $L_n$-requirement makes $A$ light. On the other hand, if every $U^{[n]}$ is finite, then we will ensure that all $D_m$-requirements succed, ensuring that $A$ will be an infinite algebra whose word problem is dark. We order the priority of the requirements by $L_1$, $D_1$, $L_2$, $D_2$, \dots.

For the requirements $L_n$ at stage $s$, if an element gets enumerated into $U^{[n]}$, we choose a large $k$ and find a monomial in $F_k\setminus (H_s)$ (recall that $(H_s)$ denotes the ideal generated by $H_s$) and enumerate it into $T_n$. 
Note that this is always possible if we maintained the Golod--Shafarevich condition for $H_s$. We then \emph{protect degree $k$}, i.e., require that no lower priority $D_m$-requirements add any (homogeneous) polynomial of degree $\le k$ to $H$. Note that $L_n$ will not injure any other requirements, nor does it add relations to $H$. $T_n$ consists of only monomials. If the $L_n$-strategy is reinitialized (which happens only due to the action of a higher-priory $D_m$-requirement), then we simply reset $T_n$ to be empty and the strategy starts anew.

For the requirements $D_m$ at stage $s$, let $k_s$ be the maximum of the degrees that a higher priority $L_n$-requirement protects, or $m+10$, whichever is larger. $D_m$ checks if there are two words $f,g$ in $W_{m,s}$ such that $f-g = 0$ in $A_s / (F_{k_s+1})$. If not, it simply waits. If there is, $D_m$ acts by adding each homogeneous component of $f-g$ into $H$. Each of the relations added will have degree $> k_s$.  Note that this respects the higher priority requirements. Once a $D_m$ has acted, it will not act again. $D_m$ will never get injured, although it may injure lower priority $L_m$. 

\begin{lem}
The resulting algebra $A$ satisfies the Golod--Shafarevich condition, thus is infinite. 
\end{lem}

\begin{proof}
We choose $\epsilon = 1/4$. Note that only $D_m$ adds relations to $H$, and every $D_m$ will add at most 1 relation of each degree $k \ge m+10$. Thus, the number $n_k$ of polynomials of degree $k$ in $H$ is at most $k-10$. So we have 
$$n_k \le k-10 \le (1/4)^2(3/2)^{k-2}$$
satisfying the Golod--Shafarevich condition. 
\end{proof}

\begin{lem}\label{dark-ring}
If $i\in S$, then $A = A_i$ is dark.
\end{lem}
\begin{proof}
If $i\in S$, then every $U^{[n]}$ is finite. We argue that each $D_m$-strategy succeeds. Fix $m$. There is a stage that all the higher priority $L_n$-strategies stabilize, thus $k_s$ also stabilizes. Note that $A_s / (F_{k_s})$ is a finite dimensional vector space over the finite field $\Z/p\Z$, so is finite. If $W_m$ is infinite, then $D_m$ will eventually see two words $x,y \in W_m$ such that $x-y = 0$ in $A_s / (F_{k_s})$. Thus, if $W_m$ is infinite, the $D_m$-strategy will be able to act, making sure that $W_m$ does not contain all distinct elements, and the requirement is satisfied. Since every $D_m$-requirement is satisfied, $A$ is dark.
\end{proof}

\begin{lem}
If $i\notin S$, then $A = A_i$ is light.
\end{lem}
\begin{proof}
If $i\notin S$, then there is some smallest $n$ such that $U^{[n]}$ is infinite. There is a stage such that all higher priority $D_m$-strategies have stabilized as they each act at most once, and thus the $L_n$-strategy will not be reinitialized after this stage. Then $L_n$ will build an infinite $T_n$ consisting of nontrivial monomials, one for each degree. If $f,g\in T_n$ are two monomials, then $f$ and $g$ are not in $H$, so $f-g \neq 0$ and $f$ and $g$ are distinct. Thus $T_n$ witnesses the lightness of $A$.
\end{proof}

\end{proof}

\subsection{Groups}

We are now ready to prove the $\Pi^0_3$-completeness of the set of finitely generated computable dark group presentations. We will utilize the construction in the previous subsection with a slight change. The group we construct will be the subgroup generated by $1+x$ and $1+y$ in the group of units of $A$. However, a priori, $1+x$ and $1+y$ may not be invertible in $A$. We ensure they are invertible by adding the relations $x^{10} = y^{10} = 0$ when we initialize the construction. With these relations, $1+x$ (and similarly $1+y$) is invertible with inverse $1-x+x^2-x^3+\cdots-x^9$.

We will see that we can maintain the Golod--Shafarevich condition with these two relations added. Furthermore, we recall that the ring $F$ is non-commutative. This is important as setting $x^{10} = y^{10} = 0$ does not trivialize $F_{20}$, which contains non-zero elements like $x^5y^5x^5y^5$. 

\begin{thm}
The index set of finitely generated computable group presentations whose word problem is dark is $\Pi^0_3$-complete.
\end{thm}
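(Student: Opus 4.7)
For the $\Pi^0_3$ upper bound, darkness of the word problem of a finitely generated computable group presentation whose equivalence $E$ is enumerated is expressed by $\forall m\,(W_m \text{ is infinite} \to \exists i,j\in W_m\, iEj)$, which is $\Pi^0_3$, exactly as in the ring case.

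For $\Pi^0_3$-hardness, I would adapt the preceding construction. Given a $\Pi^0_3$ set $S$ with the usual c.e.\ sequence $(U_i)$, build the algebra $A_i = F/I_i$ as before, except initialize the homogeneous generating set with $H_0 = \{x^{10}, y^{10}\}$ so that $1+x$ and $1+y$ are units in $A_i$ with explicit inverses $1-x+x^2-\cdots-x^9$ and $1-y+y^2-\cdots-y^9$. Let $G_i$ be the subgroup of $A_i^\times$ generated by $1+x$ and $1+y$; this group has a uniformly c.e.\ (hence, by the padding trick, computable) presentation on two generators, and two group words are equal in $G_i$ precisely when their evaluations in $A_i$ coincide. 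I would run the same $L_n$ and $D_m$ requirements under the same priority ordering, reinterpreted in terms of group words rather than ring elements.

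The $D_m$-strategy carries over directly: each group word evaluates to a polynomial in $A$; the quotient $A_s/F_{\geq k_s+1}$ is a finite $\Z/p\Z$-vector space; by pigeonhole, any infinite c.e.\ set of group words contains two, $u$ and $v$, whose evaluations agree modulo $F_{\geq k_s+1}$, and adding each high-degree homogeneous component of $u-v$ to $H$ forces $u=v$ in $A$ and hence in $G$. The $L_n$-strategy needs more care, since the ring proof simply picks a fresh monomial in $F_k\setminus(H_s)$, whereas here the witness must be an actual group element. My plan is to use iterated commutators with alternating generators, $c_N = [[\cdots[[1+x,1+y],1+x]\cdots],1+y]$ of depth $N$. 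A direct computation shows that $c_N-1$ has lowest-degree homogeneous component equal in $F$ to the iterated Lie bracket $[[\cdots[x,y],x]\cdots],y]$ of degree $N+1$; since this bracket is a nonzero element of the free Lie subalgebra whose monomials contain no run of $10$ consecutive $x$'s or $y$'s, it survives reduction by $\{x^{10},y^{10}\}$, and provided $N$ is chosen strictly above all currently protected degrees, it survives every subsequent higher-degree relation added by a $D_m$. Thus a strictly increasing sequence of depths $N$ produces an infinite c.e.\ set $T_n$ of group words pairwise distinct in $G_i$.

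Finally I would verify that the Golod--Shafarevich condition continues to hold with the initialization $\{x^{10},y^{10}\}$ present, by adjusting the parameter $\epsilon$ and, if needed, the threshold at which each $D_m$ is allowed to add new relations, so that $n_k\le \epsilon^2(2-2\epsilon)^{k-2}$ for every $k\ge 2$. Once this is in place, the three lemmas from the ring proof transfer essentially verbatim: $A_i$ is infinite-dimensional, $i\in S$ forces every $D_m$ to act and so makes $G_i$ dark, while $i\notin S$ allows the least $L_n$ corresponding to an infinite $U^{[n]}$ to build an infinite pairwise-distinct c.e.\ set of group elements witnessing the lightness of $G_i$. The main obstacle is precisely the construction of $L_n$'s witnesses as honest group elements rather than ring monomials; the iterated-commutator device resolves it by exploiting the fact that the leading Lie-bracket term of $c_N-1$ cannot be annihilated by higher-degree homogeneous relations.
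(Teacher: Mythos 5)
Your overall architecture matches the paper's: same initialization $H_0=\{x^{10},y^{10}\}$, same group $G_i\le A_i^\times$ generated by $1+x$ and $1+y$, same $D_m$-strategy via finiteness of $A_s/(F_{k_s+1})$, same priority layout. The divergence, and the gap, is in the $L_n$-strategy. Your iterated-commutator device commits in advance to a \emph{specific} element: the witness works only if the alternating left-normed Lie bracket $L_N$ of degree $N+1$ is nonzero in $A_s=F/(H_s)$. Two problems. First, your stated reason for survival modulo $(x^{10},y^{10})$ --- that the monomials of $L_N$ contain no run of ten consecutive $x$'s or $y$'s --- is false: expanding $[a_1,\dots,a_n]$ as $\sum_S(-1)^{|S|}\overleftarrow{a_S}\,a_1\,\overrightarrow{a_{S^c}}$ and taking $S$ to be all odd indices $\ge 3$ produces a monomial beginning with a run of roughly $n/2$ copies of $x$; so one must instead show that after deleting all monomials containing such runs something nonzero remains (and even the coefficient of the ``diagonal'' word $xyxy\cdots$ can vanish mod $p$, e.g.\ the coefficient of $xyx$ in $[[x,y],x]$ is $2$). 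Second, and more decisively, by the time $L_n$ acts, $H_s$ also contains the homogeneous relations already added by higher-priority $D_m$-requirements, and the ideal they generate meets \emph{every} sufficiently high degree; nothing in your argument ensures that the fixed element $L_N$ avoids it, and choosing $N$ large does not help.

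The paper sidesteps both issues with a simpler device: $L_n$ \emph{searches} for a monomial $u=x^{j_1}y^{j_2}\cdots\in F_k\setminus(H_s)$ (one exists in every degree, since the ideal is homogeneous and a graded quotient that contains some full $F_k$ is finite-dimensional, contradicting Golod--Shafarevich), protects degree $k$, and enumerates the group element $v=(1+x)^{j_1}(1+y)^{j_2}\cdots$ into $T_n$. Since $u\notin(x^{10},y^{10})$ forces every $j_i\le 9$, the \emph{top}-degree homogeneous component of $v$ is exactly $u$ with coefficient $1$; if two such witnesses were equal in $G$, the highest-degree homogeneous component of their difference, namely a protected $u$, would vanish in $A$, a contradiction. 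I would recommend replacing your commutator witnesses with this ``leading-term'' argument; the rest of your write-up (upper bound, $D_m$, Golod--Shafarevich bookkeeping with the two extra degree-$10$ relators, and the transfer of darkness/lightness between $A_i$ and $G_i$) is in line with the paper.
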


\begin{proof}
We will follow the proof as the ring case, with the following changes. 
\begin{enumerate}
\item When initializing the construction, before any requirements act, we let $H = \{x^{10}, y^{10}\}$ (instead of $H = \emptyset$). 
\item For every $i\in\omega$, instead of building $A_i$, we will build (a c.e.\ presentation of) $G_i$ which is the subgroup of the group of units of $A_i$, generated by $1+x$ and $1+y$. 
\item When constructing $T_n$, whenever $L_n$ acts, instead of putting $u = x^{j_1}y^{j_2}x^{j_3}\dots$ into $T_n$, it puts $v = (1+x)^{j_1}(1+y)^{j_2}(1+x)^{j_3}\dots$ into $T_n$. 
\end{enumerate}

Note in particular that the effect of (3) on other requirements is exactly the same as in the ring construction. Namely, the strategy protects the degree $k$. Only the single $L_n$-strategy is concerned with the content of the enumerated set $T_n$.
The rest of the construction is unchanged.

\begin{lem}
$G_i$ has a computable presentation.
\end{lem}
\begin{proof}
We will use $1+x$ and $1+y$ as the generators in the presentation. Their inverses are $1-x+x^2-x^3+\cdots-x^9$ and $1-y+y^2-y^3+\cdots-y^9$. Thus, for every word in $1+x$ and $1+y$, we can computably find its images in $A_i$. This allows us to computably enumerate all the relations that hold on $1+x$ and $1+y$ from the c.e.\ presentation of $A_i$. We then use the padding trick to obtain a computable presentation. 
\end{proof}

\begin{lem}
$G_i$ is infinite.
\end{lem}
\begin{proof}
We first check that $A_i$ is infinite since it still satisfies the Golod--Shafarevich condition. Starting with $H = \{x^{10},y^{10}\}$ only changes the single number $n_{10}$ in the Golod--Shafarevich condition, and it is straightforward to check that the Golod--Shafarevich inequality is still maintained. So $A_i$ is infinite. 

We claim that the image of $G_i$ generates $A_i$ as a $\mathbb{Z}/p\mathbb{Z}$-vector space. That is, $A_i$ can be obtained by taking the additive 
closure of $G_i$. Indeed, $1$, the identity of the group, spans $F_0$; $x = (1+x)-1$ and $y = (1+y)-1$ spans $F_1$; and every degree $k$ monomial $x^{j_1}y^{j_2}x^{j_3}\dots$ is equivalent to $(1+x)^{j_1}(1+y)^{j_2}(1+x)^{j_3}\dots$ modulo $F_{k-1}$. Since $\Z/p\Z$ has characteristic $p$, if $G_i$ were to be finite, then its span would also be finite, but $A_i$ is infinite. 
\end{proof}

\begin{lem}
If $i\in S$, then $G_i$ is dark.
\end{lem}
\begin{proof}
Any c.e.\ subset of $G_i$ is also a c.e.\ subset of $A_i$, and if two elements are equal in $A_i$ then they must be equal in $G_i$. By Lemma \ref{dark-ring}, if $i\in S$ then $A_i$ is dark, and so is $G_i$.
\end{proof}

\begin{lem}
If $i\notin S$, then $G_i$ is light.
\end{lem}
\begin{proof}
The lightness of $G_i$ will be witnessed by $T_n$ where $n$ is the smallest number with $U^{[n]}$ infinite. As before, after all higher priority $D_m$-strategies stabilize, the $L_n$-strategy will act infinitely many times and $T_n$ will be infinite. Suppose towards a contradiction that two elements of $T_n$ are equal. Then we have $v = (1+x)^{j_1}(1+y)^{j_2}(1+x)^{j_3}\dots$ and $v' = (1+x)^{j'_1}(1+y)^{j'_2}(1+x)^{j'_3}\dots$ being equal. Without loss of generality, suppose the degree of $v$ is higher. Working in $A_i$, we have $v-v'= 0$. Thus, each of the homogeneous components of $v-v'$ equal zero. In particular, the homogeneous component of the highest degree, $u = x^{j_1}y^{j_2}x^{j_3}\dots = 0$. However, $L_n$ put $v = (1+x)^{j_1}(1+y)^{j_2}(1+x)^{j_3}\dots$ into $T_k$ in the group construction because it would have put $u = x^{j_1}y^{j_2}x^{j_3}\dots$ into $T_k$ in the ring construction. This only happens if $u \neq 0$ at that stage and $L_n$ will protect the degree of $u$, making $u$ nontrivial, a contradiction. Thus every pair of elements in $T_n$ are distinct, witnessing the lightness of $G_i$. 
\end{proof}

\end{proof}

\section{The index set of universality}\label{sec:universal}

We next explore the index set of finitely presented groups whose word problem is universal. We note that the property of a ceer being universal is a $\Sigma^0_3$-complete property \cite{An16}. Yet this result holds for any ceer degree which contains ceers with infinitely many classes:

\begin{thm}[\cite{An16}]
	Let $E$ be an equivalence relation with infinitely many classes, then the index set of ceers which are equivalent to $E$ is a $\Sigma^0_3$-complete set.
\end{thm}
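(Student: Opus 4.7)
\emph{Upper bound.} I would first check that $\{i : E_i \equivc E\}$ lies in $\Sigma^0_3$. The statement $E_i \leqc E$ unfolds to ``there is an index $e$ such that $\phi_e$ is total and $\forall x,y\,(x E_i y \leftrightarrow \phi_e(x)\, E\, \phi_e(y))$''. Totality is $\Pi^0_2$, and the reduction clause is a universal quantifier over pairs of a biconditional between two c.e.\ predicates, hence $\Pi^0_2$. The outer $\exists e$ then places each reducibility direction in $\Sigma^0_3$, and the conjunction $E_i \equivc E$ remains $\Sigma^0_3$.

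\emph{Lower bound --- strategy.} For $\Sigma^0_3$-hardness, the plan is to reduce the $\Sigma^0_3$-complete set $\mathrm{Cof} = \{i : W_i \text{ is cofinite}\}$ to $\{i : E_i \equivc E\}$ by building, uniformly in $i$, a ceer $F_i$ with $F_i \equivc E$ iff $i \in \mathrm{Cof}$. I would decompose $\omega = D \sqcup \bigsqcup_{n\in\omega} C_n$ into infinite c.e.\ pieces, install a permanent copy of $E$ on $D$ (which secures $E \leqc F_i$ unconditionally), and initialize each column $C_n$ as an independent copy of $E$. As soon as $n$ is enumerated into $W_i$, the column $C_n$ is collapsed to a single $F_i$-class; no cross-column equivalences are ever added.

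\emph{Positive direction.} If $i \in \mathrm{Cof}$, only finitely many $C_n$ escape collapse, so $F_i$ is ceer-equivalent to a finite disjoint sum of copies of $E$ together with countably many singleton classes. Because $E$ has infinitely many classes, one can partition its classes into finitely many infinite c.e.\ sub-families and embed each summand of $F_i$ into its own sub-family, giving $F_i \leqc E$ and hence $F_i \equivc E$.

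\emph{Main obstacle.} The hard part will be the coinfinite case: then infinitely many columns remain uncollapsed and $F_i$ contains an $\omega$-fold disjoint sum of copies of $E$ as a summand, but for some $E$ (notably a universal ceer) that infinite sum itself reduces to $E$, so the bare collapse construction does not yet separate the two cases. The remedy I would use is to supplement each column with a priority-based diagonalization against every candidate reduction $\phi_e : F_i \to E$: while $n$ waits outside $W_i$, the column $C_n$ supplies fresh $F_i$-inequivalent witnesses which, using the infinitely many classes of $E$, can be paired with their $\phi_e$-images so as to violate the reduction clause for $\phi_e$. Organising the priority list so that the diagonalization succeeds precisely when $W_i$ is coinfinite, while preserving the permanent $E \leqc F_i$ throughout, is the technical core of the argument.
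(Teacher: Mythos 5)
The paper does not actually prove this statement—it is quoted from \cite{An16}—so I am measuring your plan against the standard argument there. Your upper bound is fine. The lower bound, however, has two gaps, each fatal for some legitimate choice of $E$, and together they show that your architecture (a permanent copy of $E$ on $D$, plus columns that either survive as copies of $E$ or collapse to a point) cannot work uniformly in $E$. (i) \emph{Cofinite case.} Your $F_i$ is $E^{\oplus(k+1)}$ together with infinitely many further classes, one per collapsed column; choosing one point from each collapsed column gives a computable transversal, so $\Id\leqc F_i$. If $E$ is dark (such $E$ exist, and are a main subject of this paper), then $\Id\nleqc E$, hence $F_i\nleqc E$ and the positive direction already fails. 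Even ignoring the collapsed columns, $E\oplus E\leqc E$ is false for self-full ceers (all dark ceers are self-full); the step ``partition the classes of $E$ into finitely many infinite sub-families and embed each summand into its own sub-family'' is exactly where this breaks, because a reduction must be a computable map compatible with the enumeration of $E$, not merely an injection of abstract classes. The failure of $E\oplus E\equivc E$ is one of the basic ways $\leqc$ on ceers differs from reducibilities on sets. (ii) \emph{Coinfinite case.} You keep $E\leqc F_i$ via $D$ and propose to defeat $F_i\leqc E$ by diagonalizing against each $\phi_e$. When $E$ is universal this is impossible by definition---every ceer reduces to $E$---so for universal $E$ your $F_i$ is equivalent to $E$ for \emph{every} $i$ and the reduction from $\mathrm{Cof}$ collapses. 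For such $E$ one must instead break $E\leqc F_i$ in the negative case, which the permanent copy on $D$ forecloses. (There is also the usual obstruction that a diagonalization which collapses $x\mathrel{F_i}y$ while hoping $\phi_e(x)\mathrel{\cancel{E}}\phi_e(y)$ persists cannot be won against a ceer $E$ you do not control.)

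The proof in \cite{An16} sidesteps both issues by making the two outcomes asymmetric in a different way: in the $\Sigma^0_3$ outcome the constructed ceer is a single clean copy of $E$ (no extra summands, so no appeal to $E\oplus E\leqc E$ is needed), while in the $\Pi^0_3$ outcome it has only \emph{finitely many classes}, which rules out equivalence with $E$ for every $E$ with infinitely many classes. This is arranged by a finite-injury construction in which the partial copy of $E$ is destroyed---collapsed into finitely many classes---each time the $\Sigma^0_3$ guess is injured, and only the true outcome leaves an uncorrupted copy; no diagonalization against reductions into $E$ is required.
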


Thus we might expect the same to hold in the setting of finite presentations of groups, yet we know that for some ceer degrees, the set of finite presentations of groups which land in that degree is empty, thus computable. We are only able to characterize this index set for the universal degree.

\begin{thm}\label{thm:universal}
	The index set of finitely presented groups whose word problem is universal is $\Sigma^0_3$-complete.
\end{thm}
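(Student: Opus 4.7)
The plan is to establish the two containments separately. For the $\Sigma^0_3$ upper bound, I fix once and for all a universal ceer $U$ and observe that ``$W_H$ is universal'' is equivalent to
$$\exists e\,\bigl[\varphi_e \text{ total} \,\wedge\, \forall n,m\,\bigl(n\rel{U}m \leftrightarrow \varphi_e(n) \rel{W_H} \varphi_e(m)\bigr)\bigr].$$
Totality of $\varphi_e$ is $\Pi^0_2$; once $\varphi_e$ is total, the inner biconditional is a conjunction of two implications between $\Sigma^0_1$ predicates, hence $\Pi^0_2$; so prefixing $\exists e$ makes the whole statement $\Sigma^0_3$, uniformly in the index of $H$.

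For $\Sigma^0_3$-hardness I will compose the $\Sigma^0_3$-completeness of the index set of universal ceers \cite{An16} with Clapham's refinement \cite{Cl64} of the Higman embedding theorem. Given a $\Sigma^0_3$ set $S$, I first construct a computable sequence of ceers $(E_i)_{i\in\omega}$ satisfying the strengthened dichotomy that $E_i$ is universal whenever $i\in S$ and $E_i$ is \emph{decidable} whenever $i\notin S$. This refines the bare non-universal alternative given by \cite{An16}; it can be arranged by running the usual ``copy $U$'' diagonalization inside modules governed by the $\Sigma^0_3$-guessing, so that if the guessing collapses then all active modules freeze after contributing only finitely many identifications, leaving $E_i$ as the identity plus finitely many collapsed classes and hence decidable. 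From each $E_i$ I then form the recursively presented group
$$G_i = \bigl\langle x_0, x_1, \ldots \bigm| x_n = x_m \text{ whenever } n \rel{E_i} m\bigr\rangle,$$
which is naturally the free group on $\omega/E_i$. The map $n \mapsto x_n$ is a computable reduction $E_i \leq_c W_{G_i}$, and conversely an $E_i$-oracle decides $W_{G_i}$ by identifying the $E_i$-class of each generator in an input word and freely reducing. Applying Clapham's theorem yields a finitely presented $H_i$ together with a computable embedding $G_i\hookrightarrow H_i$ satisfying $W_{H_i}\equiv_T W_{G_i}$, and that embedding induces a computable reduction $W_{G_i}\leq_c W_{H_i}$.

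The cases then close the argument. If $i\in S$, then $E_i$ is universal and the chain $E_i \leq_c W_{G_i} \leq_c W_{H_i}$ forces $W_{H_i}$ to be universal. If $i\notin S$, then $E_i$ is decidable, so $W_{G_i}$ is computable, so $W_{H_i}$ has Turing degree $\oh$, which precludes universality since every universal ceer has Turing degree $\oh'$. The main obstacle is the refined dichotomy for $E_i$: the standard construction in \cite{An16} delivers only ``universal vs.\ non-universal,'' and on the ``no'' side could easily produce a Turing-complete $E_i$, which would also yield a Turing-complete $W_{H_i}$ and wreck the separation. The fix is the ``freeze at identity'' bookkeeping sketched above, which guarantees that failure of universality comes out as outright decidability; once that is in place, everything else is a routine composition of known reductions, and this unsatisfying reliance on the Turing-degree gap between $\oh$ and $\oh'$ is precisely the phenomenon that motivates the $*$-universality results in Section~\ref{sec:sugs}.
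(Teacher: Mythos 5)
Your overall architecture --- a $\Sigma^0_3$ upper bound computation, then hardness by building a uniform sequence of ceers with a universal/tame dichotomy, coding each ceer into a recursively presented group, and pushing through Clapham's embedding --- is exactly the paper's, and the upper bound, the group $G_i$, and the use of Clapham are all fine. But the key lemma you rely on is false: there is no computable sequence $(E_i)$ with $E_i$ universal for $i\in S$ and $E_i$ \emph{decidable} for $i\notin S$ when $S$ is $\Sigma^0_3$-complete. The predicate ``$E_e$ is decidable'' is itself $\Sigma^0_3$ (there exists a total $0$--$1$-valued $\varphi_j$ agreeing with $E_e$; totality is $\Pi^0_2$ and agreement is a $\Pi^0_2$ conjunction of implications between $\Sigma^0_1$ predicates). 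Since a universal ceer is never decidable, your construction would give $i\notin S \iff E_i$ decidable, i.e., an $m$-reduction of the $\Pi^0_3$-complete set $\overline{S}$ to a $\Sigma^0_3$ set --- impossible. Concretely, the ``freeze at identity'' bookkeeping does not deliver decidability: in the $\Pi^0_3$ outcome there are infinitely many guessing modules, each of which acts only finitely often but deposits a finite chunk of $U$ into its own column, and there is no computable bound on when each module freezes; the resulting join is c.e.\ but in general undecidable.

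The paper's fix is to weaken ``decidable'' to ``of low Turing degree'': it interleaves the copying requirements $C_k$ with explicit lowness requirements $L_m$ in a finite injury construction, so that in the $\Pi^0_3$ outcome every $L_m$ is eventually satisfied and $E^i$ is low, while in the $\Sigma^0_3$ outcome one copying requirement acts infinitely often and makes a whole column equal to $U$. Lowness is not a $\Sigma^0_3$ property of the index, so no complexity obstruction arises, and it still precludes universality because universal ceers are Turing complete. If you replace your dichotomy lemma with this one and keep the rest of your argument (your free-group coding of $E_i$ works just as well as the paper's $\mathbb{Z}/2\mathbb{Z}$-module coding), you recover the paper's proof; the reliance on a Turing-degree separation on the negative side is, as you note, unavoidable here and is exactly what motivates Section~\ref{sec:sugs}.
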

\begin{proof}
	
We first describe the intuition of the proof. The first step is to construct, for any $\Sigma^0_3$ set $S$, a uniform sequence $(E^i)_{i\in \omega}$ of ceers so that $E^i$ is universal (as a ceer) if $i\in S$ and low (as a Turing degree) if $i\notin S$. We then consider a uniform procedure to embed each $E^i$ into the word problem of a finitely presented group $H^i$ and show that the dichotomy still holds for $H^i$. If $i\notin S$, then the word problem for $H^i$ cannot be universal since it is low. If $i\in S$, then the effective embedding of $E^i$ into the word problem of $H^i$ will show that the word problem for $H^i$ is universal.	

\begin{lemma}
	For any $\Sigma^0_3$ set $S$, there is a uniform sequence of ceers $E^i$ so that $E^i$ is universal if $i\in S$ and $E^i$ has low Turing degree if $i\notin S$.
\end{lemma}
\begin{proof}
	We note that this construction is similar to the one in \cite[Theorem 5.2]{An14}. 
	Fix a universal ceer $U$. We can effectively find indices for a sequence $W_i$ of c.e.\ sets so that $i\in S$ if and only if $\exists j (W_i^{[j]} \text{ is infinite})$.

	We construct $E^i$ as a uniform join $E^i = \oplus_j X^j$. We have requirements as follows:
	
	$C_k$: If $W_i^{[k]}$ is infinite, then there is some $j$ so that $X^j = U$.
	
	$L_m$: If $\phi_{m,s}^{E^i_s}(m)\downarrow$ for infinitely many $s$, then 
$\phi_m^{E^i}(m)	\downarrow$.

We order their priority by $C_0, L_0, C_1, L_1, \dots$.

We want to construct $E^i$ so that if no column of $W_i$ is infinite, then we satisfy every $L_m$-requirement, and if $k$ is the least such that $W_i^{[k]}$ is infinite, then $C_k$ is satisfied. Thus, $E^i$ is universal in the $\Sigma^0_3$-outcome,  and we ensure that $E^i$ has low Turing degree in the $\Pi^0_3$-outcome. 


To satisfy $C_k$, if a new number is enumerated into $S^{[k]}$, we act by:
\begin{itemize}
\item If $C_k$ is not yet initialized, we initialize it by choosing a new parameter $j$ so that the set $X^j$ is not restrained by any higher priority $L_m$, and let $X^j_s = U_s$. 
\item If $C_k$ is already initialized, we let $X^j = U_s$, i.e., make $X^j$ catch up with the current $U_s$. 
\end{itemize}

To satisfy $L_m$, whenever $\phi_{m,s}^{E^i_s}(m)\downarrow$ holds, we place a restraint on the use of this computation.

Whenever we act (including placing restraint), all lower-priority strategies are reinitialized. 


The construction is put together as a standard finite injury argument. In the $\Sigma^0_3$-outcome, there is a least $k$ such that $W_i^{[k]}$ is infinite. Choose a stage such that every previous (finite) column $W_i^{[\ell]}$ and every higher priority $L_m$ has stabilized. Then at the next stage $s$ that $W_i^{[k]}$ acts, it will choose a new column $j$, which will also stabilize for the rest of the computation. Since $W_i^{[k]}$ will act infinitely often and never be injured, we will have $X^j = U$, making $E^i$ universal.

In the $\Pi^0_3$-outcome, we argue that every $L_m$ is satisfied, making $E_i$ low: Fix $L_m$ and choose a stage so that every higher priority $C_k$ and $L_{m'}$ has stabilized. Then if it acts again, it will restrain its use and never get injured, satisfying the $L_m$ requirement. 
\end{proof}

Clapham showed that any group $G$ with a c.e.\ presentation can be embedded into a finitely presented group $H$  such that the word problem of $G$ and $H$ have the same Turing degree \cite{Cl67}, see also \cite[Chapter IV.7]{Ly01}. The construction is explicit and one can check that it is uniform (in the index of the c.e.\ presentation) and effective. We note that the fact that this is effective uses the fact that the Matiyasevich theorem is effective.

For each ceer $E^i$, we give an embedding of $E^i$ into a finitely presented group $H$. First, we construct the recursively presented group $G^i$ with generators $\{g_k\mid k\in \omega\}$ and relations $\{g_k^2 = 1 \mid k\in \omega\}\cup \{g_kg_j=g_j g_k\mid j,k\in \omega\}\cup \{g_j=g_k\mid j\mathrel{E^i} k\}$. Essentially $G^i$ is the $\mathbb Z/2\mathbb Z$-module generated by the classes of $E^i$. the map $n\mapsto g_n$ gives an embedding of $E^i$ into the word problem of $G^i$. The Turing degree of the word problem of $G^i$ is the same as the Turing degree of $E^i$. Finally, form the groups $H^i$ by employing Clapham's theorem on $G^i$. If $i\in S$, then the word problem of $H^i$ is universal as it embed the word problem of $G^i$, and thus $E^i$; and if $i\notin S$, then the word problem of $H^i$ has the same Turing degree as the word problem of $G^i$ and $E^i$, so is low and cannot be universal.
\end{proof}

We note that we used Clapham's result to get the non-universality of $H^i$ for $i\notin S$ by using the Turing degree. This is somewhat unsatisfying, since it does not seem to support the thesis that the ceer degrees give us a refined setting to explore complexity of word problems. We will resolve this complaint below in Corollary \ref{cor:universalityIsSigma3Complete-fp} showing that even within the Turing degree of $0'$, universality is still $\Sigma^0_3$-complete.

In the next section, we explore to what extent we could hope to prove this theorem using a ceer-version of Clapham's result.

\section{\sugs}\label{sec:sugs}

By work from Della Rose, San Mauro, and Sorbi \cite{De23}, we know that there are recursively presented groups whose ceer degree contains no word problem of a finitely generated group.\footnote{See footnote 1 in the introduction.} So, we know that there is no version of Clapham's result which preserves ceer degree. What would be needed in the previous section is a version of Clapham's result simply preserving non-universality. We explore that notion here.

Since Higman embedding, including Clapham's version, is built upon the operations of free product and HNN-extension, we explore whether these preserve non-universality. 

\begin{defn}
	A recursively presentable group $G$ is a \emph{\sug} if the word problem of $G$ is not a universal ceer, yet whenever $H$ is a non-trivial group, the word problem of the free product $G*H$ is a universal ceer. 
\end{defn}

Note that applying Higman embedding (even ala Clapham) to a \sug will produce a universal group, since some of the steps require taking free products.

Next we see that the quantification over all non-trivial groups $H$ is not necessary, and we can instead consider only $G\ast \Z/2\Z$.

\begin{lem}
Let $G, H$ be groups and $g_0,\dots,g_n\in G$, $1_H\neq h\in H$. Write $\Z/2\Z = \langle a \rangle$. Then $g_0ag_1a\dots ag_n = 1$ in $G*(\Z/2\Z)$ iff $g_0h^{-1}g_1h\dots h^{(-1)^n}g_n = 1$ in $G*H$. 
\end{lem}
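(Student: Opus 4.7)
The plan is to prove both implications simultaneously by a matched reduction on the two words, eliminating any interior $g_i$ that equals $1_G$, and then invoking the Normal Form Theorem for free products.

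Suppose some index $i$ with $1 \le i \le n-1$ satisfies $g_i = 1_G$. On the left, the local subword $g_{i-1}\, a\, g_i\, a\, g_{i+1}$ collapses to $g_{i-1}g_{i+1}$ via $a^2 = 1$. On the right, the corresponding local subword is $g_{i-1}\, h^{(-1)^i}\, g_i\, h^{(-1)^{i+1}}\, g_{i+1}$, and since $(-1)^{i+1} = -(-1)^i$ the two $h$-syllables surrounding $g_i$ are mutually inverse in $H$ and also cancel, yielding $g_{i-1}g_{i+1}$. So both words collapse in lockstep to expressions of the same shape on the shorter sequence $g_0, \ldots, g_{i-2}, g_{i-1}g_{i+1}, g_{i+2}, \ldots, g_n$ of length $n-1$. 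A short parity check verifies that after reindexing the alternating pattern on the right still has exponent $(-1)^j$ between new positions $j-1$ and $j$, so the shortened expression is again of the form prescribed by the lemma. Iterating this, we reach sequences $g_0', \ldots, g_m'$ (with $m \le n$) in which no interior $g_j'$ equals $1_G$, and the two expressions still correspond to each other.

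If $m = 0$, both words are just $g_0'$ and the equivalence is immediate. If $m \ge 1$, then after dropping any trivial endpoints $g_0'$ or $g_m'$, the left-hand expression is a reduced word in $G * (\Z/2\Z)$: its syllables alternate between the two factors and every syllable is nontrivial. By the Normal Form Theorem for free products it is therefore nontrivial. The right-hand expression is likewise reduced in $G * H$, using $h \neq 1_H$, and hence nontrivial as well. So both sides equal $1$ if and only if $m = 0$ and $g_0' = 1_G$, a condition symmetric in the two expressions.

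The only real technical point is the parity bookkeeping showing that the alternating pattern of $h$'s and $h^{-1}$'s on the right survives each reduction step in lockstep with the $a^2 = 1$ cancellation on the left; once that is confirmed, the rest is a direct appeal to the Normal Form Theorem for free products.
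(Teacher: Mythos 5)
Your proof is correct and uses the same key mechanism as the paper's: a trivial interior $g_i$ forces the surrounding $a$'s on the left and the mutually inverse $h^{\pm 1}$'s on the right to cancel in lockstep, and the free-product normal form theorem finishes the argument. The paper organizes this as an induction on $n$ (leaving the reverse direction as ``similar''), whereas your iterated reduction to a fully reduced word treats both directions symmetrically at once --- a cosmetic but arguably cleaner packaging of the same idea.
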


\begin{proof}
We induct on $n$. When $n = 0$ the statement is clear, when $n = 1$ both $g_0ag_1$ and $g_0h^{-1}g_1$ are never 1, and when $n = 2$ we have $g_0ag_1ag_2 = 1$ iff $g_1 = g_0g_2 = 1$ iff $g_0h^{-1}g_1hg_2 = 1$. 

Suppose $n \ge 3$ and $g_0ag_1a\dots ag_n = 1$. Then there is some $1 \le i\le n-1$ so that $g_i = 1$. Thus, we have $1 = g_0ag_1a\dots ag_{i-1}ag_iag_{i+1}a\dots  ag_n = g_0ag_1a\dots ag_{i-1}g_{i+1}a\dots  ag_n$. By induction hypothesis, this implies that $$g_0h^{-1}g_1h\dots h^{(-1)^{(i-1)}}g_{i-1}g_{i+1} h^{(-1)^{i}}\dots  h^{(-1)^{(n-2)}}g_n = 1.$$
On the other hand, we have $g_i = 1$, so 
\begin{align*}
&g_0h^{-1}g_1h\dots h^{(-1)^{(i-1)}}g_{i-1}h^{(-1)^i}g_i h^{(-1)^{(i+1)}}g_{i+1}h^{(-1)^{(i+2)}}\dots h^{(-1)^n}g_n\\ 
= &g_0h^{-1}g_1h\dots h^{(-1)^{(i-1)}}g_{i-1}g_{i+1}h^{(-1)^{(i+2)}}\dots h^{(-1)^n}g_n\\
= & 1.
\end{align*}
The reverse direction is similar. 
\end{proof}

\begin{cory}\label{cor:CheckingZ2isenough}
A group $G$ is a \sug iff the word problem of $G$ is not a universal ceer but the word problem of $G*(\Z/2\Z)$ is a universal ceer. 
\end{cory}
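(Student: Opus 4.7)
The forward implication is immediate from the definition of \sug: taking $H=\Z/2\Z$ in the defining condition gives both halves. For the converse, assume $W_G$ is not universal while $W_{G*(\Z/2\Z)}$ is universal, and fix an arbitrary nontrivial group $H$ with $h\in H\setminus\{1\}$. The plan is to exhibit a computable reduction $W_{G*(\Z/2\Z)}\le W_{G*H}$, which forces $W_{G*H}$ to be universal as well and so establishes that $G$ is \sug.

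The reduction $f$ is the natural translation suggested by the preceding lemma. Given a word $w$ over the alphabet of $G$-generators (with inverses) together with $a$, first replace every $a^{-1}$ by $a$ (legal since $a^2=1$), parse $w$ as $u_0\,a\,u_1\,a\,\cdots\,a\,u_k$ where each $u_i$ is a word in the $G$-generators, and output
\[ f(w) := u_0\, h^{-1}\, u_1\, h\, u_2\, h^{-1} \cdots h^{(-1)^k} u_k. \]
This is clearly computable, and the preceding lemma already handles the ``equals one'' case: $w=1$ in $G*(\Z/2\Z)$ iff $f(w)=1$ in $G*H$. The task is to upgrade this into a ceer reduction, i.e., to show $w\sim_{G*(\Z/2\Z)} w'$ iff $f(w)\sim_{G*H} f(w')$.

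The verification hinges on two dual parity invariants. The quotient $G*(\Z/2\Z)\twoheadrightarrow \Z/2\Z$ killing $G$ shows that $w\sim w'$ forces the $a$-counts $k$ and $k'$ to satisfy $k\equiv k'\pmod 2$. Conversely, the projection $\pi\colon G*H\twoheadrightarrow H$ killing $G$ sends $f(w)$ to the telescoping product $\prod_{i=1}^k h^{(-1)^i}$, which equals $1$ when $k$ is even and $h^{-1}$ when $k$ is odd; since $h\neq 1$ these two values are distinct in $H$, so $f(w)\sim f(w')$ likewise forces $k\equiv k'\pmod 2$. With parities matched, a direct word-level bookkeeping check shows $f(ww'^{-1}) = f(w)\,f(w')^{-1}$ (both sides have the same $G$-blocks separated by the same alternating sequence of $h^{\pm1}$'s). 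The preceding lemma, applied to $ww'^{-1}$ written as an alternating product with $k+k'$ copies of $a$, now closes the chain $w\sim w' \iff ww'^{-1}=1 \iff f(ww'^{-1})=1 \iff f(w)\sim f(w')$.

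The delicate point is precisely the reverse parity step: the map $f$ is not a group homomorphism (indeed $f(w^{-1})\neq f(w)^{-1}$ when $k$ is odd), so without an invariant like $\pi$ one would have to worry about accidental collisions $f(w)=f(w')$ in $G*H$ between words of different $a$-parity. The projection $\pi$, which exists because $H$ is a retract of $G*H$, supplies exactly the obstruction needed to prevent this.
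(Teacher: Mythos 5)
Your proof is correct and follows essentially the same route as the paper: the identical reduction $f(g_0ag_1a\cdots ag_n)=g_0h^{-1}g_1h\cdots h^{(-1)^n}g_n$, verified by passing to $u^{-1}v$ and invoking the preceding lemma. The only difference is bookkeeping: the paper observes directly that $(f(u))^{-1}f(v)$ is itself an alternating product in $h^{\pm 1}$ and applies the lemma to that word, whereas you first match $a$-parities via the retractions onto $\Z/2\Z$ and $H$ and then use $f(ww'^{-1})=f(w)f(w')^{-1}$; both verifications are sound.
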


\begin{proof}
The only if direction is clear. For the if direction, supposing the word problem of $G*\Z/2\Z$ is universal, it suffices to show that the word problem $G*H$ is universal for every nontrivial $H$. Given a nontrivial $H$, we construct a reduction from the word problem of $G*\Z/2\Z$ to $G*H$. 

Fix (non-uniformly) a nontrivial element $h\in H$. Define the reduction $f$ from the word problem of $G*\Z/2\Z$ to the word problem of $G*H$ by $f(g_0ag_1a\dots ag_n) = g_0h^{-1}g_1h\dots h^{(-1)^n}g_n$. Suppose $u = g_0ag_1a\dots ag_n$ and $v = g'_0ag'_1a\dots ag'_{n'}$, so we have $f(u) = g_0h^{-1}g_1h\dots h^{(-1)^n}g_n$ and $f(v) = g'_0h^{-1}g'_1h\dots h^{(-1)^n}g'_{n'}$. We observe that $u^{-1}v = g^{-1}_n a \dots a g^{-1}_1 a g^{-1}_0g'_0ag'_1a\dots ag'_{n'}$ and $$(f(u))^{-1}f(v) = g^{-1}_n h^{(-1)^{(n+1)}}\dots h^{-1} g^{-1}_1 h g^{-1}_0 g'_0h^{-1}g'_1h\dots h^{(-1)^n}g'_{n'}.$$ 

Since the signs of the powers of $h$ in $(f(u))^{-1}f(v)$ alternate, we may apply the previous lemma to get $u = v$ iff $u^{-1}v = 1$ iff $(f(u))^{-1}f(v) = 1$ iff $f(u) = f(v)$, so $f$ is a reduction. 
\end{proof}

Finally, we give a direct construction showing the existence of \sugs.

\begin{thm}\label{thm:exists-sugs}
	There exists \sugs.
\end{thm}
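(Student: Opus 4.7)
By Corollary~\ref{cor:CheckingZ2isenough}, it is enough to produce a recursively presented group $G$ whose word problem is not a universal ceer but such that the word problem of $G * \Z/2\Z$ is. The plan is a finite-injury priority construction. Fix a universal ceer $U$, an enumeration $(E_e)_{e \in \omega}$ of all ceers, and an enumeration $(\phi_e)_{e \in \omega}$ of partial computable functions. Present $G$ on generators $\{g_\alpha : \alpha \in \omega\}$ partitioned into disjoint infinite blocks $B_e$, one per positive requirement, and enumerate a c.e.\ set of relations $R$ in stages.

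Interleave the requirements in priority order $N_0, P_0, N_1, P_1, \dots$, where $N_e$ ensures $\phi_e$ is not a reduction witnessing $U \leq_c W_G$, and $P_e$ builds a computable $f_e$ witnessing $E_e \leq_c W_{G * \Z/2\Z}$. The $P_e$-strategy assigns fresh generators from $B_e$ and defines $f_e(n)$ as a carefully chosen alternating word $w^e_{n,0}\,a\,w^e_{n,1}\,a\cdots a\,w^e_{n,k_n}$; when $(n,m)$ enters $E_e$ at some stage, the strategy enumerates into $R$ the $G$-relations needed to force the reduced forms of $f_e(n)$ and $f_e(m)$ in $G * \Z/2\Z$ to coincide. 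By the normal-form theorem for free products, this reduces to adding a short list of identifications and/or trivializations among the individual $G$-letters $w^e_{n,i}$. The $N_e$-strategy waits until some pair $(x,y) \in U$ is enumerated with $\phi_e(x), \phi_e(y)$ converged to distinct words in $G$'s generators, and then imposes a restraint preventing lower-priority $P_j$ actions from enumerating any relation that would identify $\phi_e(x)$ with $\phi_e(y)$ in $G$.

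The main obstacle is that the positive strategies add relations to $R$ which live in $G$ itself, and these collectively threaten to embed every ceer (hence a universal one) into $W_G$, contradicting the goal of non-universality. The entire construction hinges on designing the words $f_e(n)$ and the accompanying collapse pattern so that this does not happen: for instance, by arranging that collapsing $f_e(n)$ onto $f_e(m)$ can be achieved via trivializations of specific generators in $B_e$ rather than by full pointwise identifications, the sub-ceer induced on $W_G$ from $B_e$ remains strictly weaker than $E_e$, so that the $N_e$-restraints can neutralize it. The verification then follows the standard finite-injury template: each $N_e$ is injured only finitely often, eventually stabilizes, and secures $U \not\leq_c W_G$ via $\phi_e$; each $P_e$ eventually settles on its generator assignments, and its $f_e$ witnesses $E_e \leq_c W_{G * \Z/2\Z}$. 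Taken together, $W_G$ is non-universal while $W_{G * \Z/2\Z}$ is universal, so $G$ is a \sug.
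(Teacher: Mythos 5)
Your overall framing is right—reduce to Corollary~\ref{cor:CheckingZ2isenough}, then run a finite-injury construction with coding requirements pushing a universal ceer into $W_{G*\Z/2\Z}$ and diagonalization requirements keeping $W_G$ non-universal—but the two places where the actual work lives are both missing or set up in a way that would fail.

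First, your negative requirements $N_e$ try to defeat reductions of a \emph{fixed} universal ceer $U$ to $W_G$ by waiting for a pair $(x,y)\in U$ with $\phi_e(x)\neq\phi_e(y)$ and then restraining. This one-sided strategy does not suffice: to defeat $\phi_e$ you need either a pair $x\mathrel{U}y$ with $\phi_e(x)\neq_G\phi_e(y)$ or a pair $x\mathrel{\cancel U}y$ with $\phi_e(x)=_G\phi_e(y)$, and the second alternative is $\Pi^0_1$ in $U$, so you cannot wait for confirmation of it; moreover you have no control over which pairs $U$ collapses. The paper avoids this by building an \emph{auxiliary} ceer $X$ that the construction controls: the strategy $R_e$ picks fresh $a,b$, waits for $\phi_e(a),\phi_e(b)$ to converge, computes $w=\phi_e(a)\phi_e(b)^{-1}$ in the current finitely presented abelian approximation $G_s$ (where the word problem is decidable), and then \emph{chooses} whether to collapse $a\mathrel{X}b$ so as to disagree with whatever $G$ will do with $w$. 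That ``decide the source ceer after seeing the image'' move is the key idea your setup forecloses.

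Second, you correctly identify the main obstacle—the coding relations threaten to make $W_G$ itself universal—but the sentence ``the entire construction hinges on designing the words $f_e(n)$ and the accompanying collapse pattern so that this does not happen'' is exactly the content of the theorem, and your gesture toward ``trivializations of specific generators'' does not resolve it. The coding requirement acts infinitely often (whenever $U$ collapses) and cannot be restrained by any $N_e$, so you must show the diagonalization survives \emph{unbounded future coding activity}. The paper does this with a concrete combinatorial mechanism: the words $v_i=\prod_k ax_k$ use blocks of $10^{i+1}-10^i$ generators organized into levels, with even/odd product relations imposed in advance, and the $R_e$ case analysis (free letters, low-level letters, consecutive even/odd letters with differing exponents, and the degree-dropping case) shows that $w$ can always be forced to a definite truth value compatible with all future $U$-coding. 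Relatedly, coding \emph{every} ceer $E_e$ into $W_{G*\Z/2\Z}$, as you propose, is unnecessary (one universal $U$ suffices) and makes the injury analysis strictly harder, since you then have infinitely many positive strategies each acting infinitely often above any given $N_e$.
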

\begin{proof}
	We give a direct construction of a group $G$. $G$ will be an abelian group with generators $\{x_i\mid i\in \omega\}$. Throughout the construction, we will add 4 types of relations, each of the form $x_j = w(x_0,\ldots, x_{j-1})$, to the presentation of our group. These are:
\begin{enumerate}
\item $x_j = 1$.
\item $x_j = x_i$ for some $i < j$. 
\item $x_j = x_i^{-1}$ for some $i < j$.
 \item $\displaystyle \prod_{k\in S} x_k = 1$ for some $i < j$, which is equivalent to $\displaystyle x_{k'} = \left( \prod_{k\in S\setminus\{k'\}} x_k\right)^{-1}$, where $S$ is a subset of natural numbers and $k' = \max S$. (In fact, $S$ is always either all even or all odd indices of a level $j$, explained below.)
\end{enumerate}
	
At any stage, whenever we consider a word, we always reduce it using the relations already enumerated up to that stage, by replacing the left-hand side of the relation by the right-hand side. 
	
From the previous Corollary, it suffices to make $G* \Z/2\Z$ have universal word problem. Let $a$ be the non-identity element of $\Z/2\Z$. We will give a sequence of words $(v_i)_{i\in \omega}$ in the letters $\{x_i\mid i\in \omega\} \cup \{a\}$. We will ensure that $v_i \mathrel{W_{G* \Z/2\Z} } v_j$ if and only if $i \mathrel{U} j$ for a fixed universal ceer $U$. 
	
	We fix the words $$v_i:= \prod\limits_{k= 10^i}^{10^{i+1}-1} ax_k.$$ Note the role of $a$ is in separating the elements $x_k$ of $G$ so they do not combine in the free product $G* \Z/2\Z$. 
	
	At the beginning of the construction, for every $j$, we add the following relations to $G$:
$$\prod_{\substack{k= 10^j \\ k\text{: even}}}^{10^{j+1}-2} x_k = 1$$
$$\prod_{\substack{k= 10^j+1 \\ k\text{: odd}}}^{10^{j+1}-1} x_k = 1$$
Note that after reducing $v_i$ using these relations, the $x_{2\ell}$ with the largest even index becomes the inverse of the product of all the previous $x_{2\ell'}$ and similarly for the last $x_{2\ell+1}$.

For each $k\in [10^j,10^{j+1})$ aside from the last two elements (which we consider already determined by the two added relations in $G$), we say $x_k$ is a \emph{level $j$ generator}. We may at a later stage say that $x_k$ is no longer a level $j$ generator. This happens in 2 possible ways: 
	\begin{itemize}
		\item We already collapsed it to being equivalent to some level $i$ generator for $i<j$ or to being equivalent to $1$.
		\item We have caused $x_k$ to cancel in the word $v_j$ with its neighbor. As such, it will contribute nothing to the word $v_j$ anymore, and we will say that it is no longer level $j$. Rather, we will say that it is \emph{free}.  
	\end{itemize} 
	
At any stage of the construction, we say $x_i$ and $x_j$ are \emph{consecutive} if in the current form of $v_i$ (after reduction using the relations already enumerated up to this stage), they are separated by only an $a$. For instance, $x_{11}$ and $x_{12}$ are initially consecutive in $v_1 = ax_{10}ax_{11}ax_{12}\cdots$. But if the relations $x_{13} = x_{15} = 1$ and $x_{12} = x_{14}^{-1}$ (making $x_{12}$ free)  get enumerated, then we have the current $v_1$ becomes $ax_{10}ax_{11}ax_{16}\cdots$, so $x_{11}$ and $x_{16}$ are now consecutive. The same applies to consecutive even or odd $x_i$, for instance, $x_{10}$ and $x_{16}$ become consecutive even generators in the example. 
	
	At any stage $s$, if we see $i<j$ become $U$-equivalent, then we take some consecutive $10^{i+1}-10^i$ generators that are currently level $j$ generators. Note that we verify in Lemma \ref{StillActiveStuff} that we will have enough current level $j$ generators. We will collapse these to being equivalent to the level $i$ generators and all the other level $j$ generators to being equivalent to $1$. This will ensure directly that $v_j = v_i$ in $G*\Z/2\Z$.
	
	We also construct an auxiliary ceer $X$ and have the requirements:
	$$R_e: \phi_e \text{ is not a reduction of $X$ to $W_G$}.$$
	
	This will ensure that $W_G$ is not a universal ceer, though we ensure that $i\mapsto v_i$ is a reduction of $U$ to $W_{G*\Z/2\Z}$, so that $W_{G*\Z/2\Z}$ is a universal ceer.
	
	In order to satisfy requirement $R_e$, we act as follows:

	Pick two new numbers $a,b$ and wait for $\phi_e(a)$ and $\phi_e(b)$ to converge. Then consider the word $w = \phi_e(a)\cdot \phi_e(b)^{-1}$. Let $G_s$ be the currently built $G$, and we observe that $G_s$ is abelian and finitely presented so has computable word problem. 
	We use the currently placed relators on $G_s$ to simplify $w$ as much as possible. In particular, for each $j$, we do not have $x_{10^{j+1}-1}$ or $x_{10^{j+1}-2}$ appearing in $w$.
	
	Our goal is to ensure that $a X b$ if and only if $w\neq 1$ in $G$. Let $K$ be greatest so that there are letters in $w$ at level $K$. We proceed based on the following cases:
	
	Case 0: If $w = 1$, we do not do anything (so $\neg aXb$) and we are done with this requirement.
	
	Case 1: There are free letters appearing in $w$. We will verify in Lemma \ref{Case0works} below that this ensures $w \neq 1$, so we simply cause $a X b$ and we are done with this requirement.
	
	Case 2: $K\leq e$, we act under the assumption that there will be no future collapse in $U\cap [0,K]^2$. That is, we look at the subgroup of $G$ generated by $\{x_i\mid i<10^{K+1}\}$. Since we have $w \neq 1$, we will assume that there is no future collapse among these letters, so we collapse $a X b$.
	
	The idea is that the subgroup generated by $\{x_i\mid i<10^{K+1}\}$ will only change if a higher priority requirement acts or if $U$ changes on $[0,e]^2$, either of which will happen only finitely often, so we are willing to rely on this and re-start the requirement (with a new choice of $a$ and $b$) if there is such a change.
	
	Case 3: $K> e$. We let $w_K=\prod_{k \in [10^j,10^{j+1})} x_k^{e_k}$ be the subword of $w$ comprising the level $K$ generators and consider further cases. 
	
	
	
	Case 3a: There are consecutive even letters: $x_{2n}$ and $x_{2n'}$ so that $e_{2n} \neq e_{2n'}$.
	
	We write $v_i = \alpha \cdot x_{2n} x_\ell^a x_{2n'} x_m^a \cdot \beta$ (or $v_i = \alpha \cdot x_m^a x_{2n} x_\ell^a x_{2n'} \cdot \beta$ if $x_{2n'}$ is the level $K$ generator with the largest even index).We collapse $x_\ell = x_m = 1$ and we collapse $x_{2n} = x_{2n'}^{-1}$. Observe that $x_\ell$ and $x_m$ are now no longer level $k$ generators but are just 1, and $x_{2n}$ and $x_{2n'}$ are now free. We observe that $x_{2n}$ and $x_{2n'}$ do not cancel out in $w$ and now $w$ contains a free generator as in case 1. We collapse $a X b$ and get the same victory as in case 1.
	
	Case 3b: There are consecutive odd letters: $x_{2n+1}$ and $x_{2n'+1}$ so that $e_{2n+1} \neq e_{2n'+1}$. This is identical to Case 3a.
	
	
	Case 3c: Every level $K$ even generator appears in $w$ with the same exponent and every level $K$ odd generator appears in $w$ with the same exponent. Then we add the relators 
	$$\prod\limits_{x \text{: even level $K$ generators}}x = 1 $$
	$$\prod\limits_{x \text{: odd level $K$ generators}}x = 1. $$
		
	This amounts to removing the largest even and largest odd level $K$ generators. This also causes reduction on $v_i$, eliminating the last even term and rewriting the second-to-last term based on the relation (and similarly for the odd terms). 
	As a result, $v_i$ maintains the same form, namely, $v_i = \prod ax_k$ with the last $x_{2\ell}$ equal to the inverse of the product of all the $x_{2\ell'}$ of level $K$ and similarly for the last $x_{2\ell+1}$. Thus, after this action we have exactly 2 less level $K$ generators and we have just ensured that $w_K = 1_G$.
	
	We now reconsider which case we are in, noting that the definition of $K$ has now dropped.
	
	\subsubsection*{Verification}
	
	We first observe that we always have enough level $j$ generators that we can respond if we see $i \mathrel{U} j$ for some $i<j$.
	
	
	\begin{lem}\label{StillActiveStuff}
		At every stage, if $j$ is currently the least member of its $U$-class, then there are more than $10^j$ level $j$ generators.
	\end{lem}
	\begin{proof}
		Observe that only the requirements $R_e$ with $e< j$ can cause action removing a level $K$ generator. Each such action can remove at most 4 level $K$ generators (in case 2a or 2b). Note that even accounting for reinitialization, which can come from injury due to higher-priority requirements acting or due to $U$-collapse below $j$, these $j$ requirements can act at most $j\cdot 2^j$ times, so we have $10^{j+1}-10^j - 4\cdot j\cdot (2^j)$ remaining level $K$ generators. Observe that $10^{j+1}-10^j-4\cdot j\cdot (2^j)>10^j$ for any $j\geq 0$.
	\end{proof}

\begin{lem}
At any stage, the group $G_s$ is freely generated by the level $i$ generators for all $i$ and the free generators.
\end{lem}
\begin{proof}
Observe that all our relations are of the form $x_j = w(x_0, \dots, x_{j-1})$, and any $x_j$ appears on the left hand side of a relation at most once. 
\end{proof}

	\begin{lem}\label{Case0works}
		If $x$ is free at stage $s$, then it never appears in any new relator. In particular, if $w$ is found for a requirement $R_e$ and $w$ (after being simplified at stage $s$) contains a free generator $x$ in it, then $w\neq 1_G$.
	\end{lem}
	\begin{proof}
		Observe that in no case do we ever add a relator involving an already free generator. 
		Thus if $x$ is already free at stage $s$ then $G$ can be written as $G_F \times G_A$ where $G_F$ is the subgroup of $G$ freely generated by the generators which are free at stage $s$ and $G_A$ is generated by all those which are level $j$ generators for some $j$. 
		
		Since all future relators being added to $G$ are purely within $G_A$, this splitting persists and the projection of $w$ onto $G_F$ is already not 1, so $w\neq 1_G$.
	\end{proof}

	\begin{lem}
		For any $i$ and $j$, $i \mathrel{U} j$ if and only if $v_i \mathrel{=_{G*\Z/2\Z}} v_j$.
	\end{lem}
	\begin{proof}
		If $i U j$, then we actively collapse generators in $G$ to ensure that $v_i =_{G*\Z/2\Z} v_j$. On the other hand, observe that the only relations that involve both level $i$ and level $j$ generators for $i \neq j$ are of the form $x_n = x_m$, introduced when we respond to a $i\mathrel{U}j$ collapse. Thus, if $ i \mathrel{\cancel U} j$, then no generators of level $i$ and $j$ are related, so we have $v_i \neq_{G*\Z/2\Z}v_j$. 
	\end{proof}

	\begin{lem}
		There is no reduction from $X$ to $W_{G}$, so $W_G$ is not a universal ceer.
	\end{lem}
	\begin{proof}
		Suppose towards a contradiction that $\phi_e$ is a reduction from $X$ to $W_G$. Let $s$ be the last stage where $R_e$ is initialized. Then $a$ and $b$ are chosen to have their final values. We consider the outcome of the $R_e$ strategy. In Case 0, we have $\neg a\mathrel{X}b$ but $\phi(a) = \phi(b)$. In Case 1, Case 3a, or Case 3b, Lemma \ref{Case0works} shows that $a \mathrel{X} b$ yet $\phi_e(a)\neq_G \phi_e(b)$. In case 2, the assumption that $R_e$ is never reinitialized shows that there is no more collapse in $U$ below $e$, so we have $a \mathrel{X} b$ if and only if $w\neq_G 1$. Finally, Case 3c can happen only finitely often as it causes $K$ to drop and the strategy continues.
	\end{proof}

\end{proof}

\section{Direct Products Do Not Achieve Universality}\label{sec:direct-prod}
\subsection{Preliminary on u.e.i.\ ceers}
	In studying the degree of the universal ceer, it is often useful to consider a combinatorial characterization of this degree. 
	\begin{defn}
		A nontrivial ceer (i.e., having at least two classes) $E$ is \emph{uniformly effectively inseparable}, or \emph{u.e.i.}\ for short, if there is a computable function $p(a,b,i,j)$ such that if $ a\mathrel{\cancel E}b$ and $[a]_E \subseteq W_i$ and $[b]_E \subseteq W_j$, then $p(a,b,i,j) \notin W_i\cup W_j$. 
	\end{defn} 
	
	\begin{thm}[{\cite[Corollay 3.16]{An14}}]
		A ceer which is u.e.i.\ is universal. Consequently, a ceer $E$ is universal if and only if there is a c.e.\ subset $X$ which is $E$-closed (i.e., $y \mathrel{E} x$ with $x\in X$ implies $y\in X$) and the restriction of $E$ to $X$ is u.e.i.. 
	\end{thm}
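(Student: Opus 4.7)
The first statement is the main content; the ``consequently'' clause follows quickly from it. For the first statement, my plan is to show that an arbitrary ceer $R$ reduces to a given u.e.i.\ ceer $E$ with witness $p$. I would use the recursion theorem to simultaneously construct a computable reduction $f:\omega \to \omega$ and c.e.\ indices $\gamma(n)$ for sets designed to equal $[f(n)]_E$ in the limit. The enumeration rule for $W_{\gamma(n)}$ is: at stage $s$, for every $m\le s$ with $f(m)$ already defined and $(m,n)\in R_s$, enumerate $f(m)$ into $W_{\gamma(n)}$, and close $W_{\gamma(n)}$ under every $E_s$-equivalence observed so far. The values $f(n)$ are defined stage by stage with two cases: if $R_n$ already records $n R m$ for some $m<n$, set $f(n):=f(m_0)$ for the least such $m_0$; otherwise set $f(n):=p(e_1,e_2,\delta_n,\delta_n)$, where $e_1\not E e_2$ are two already-chosen representatives and $\delta_n$ is a uniformly obtained c.e.\ index for the running union $\bigcup_{m\in L_n} W_{\gamma(m)}$, with $L_n$ listing the previously established $R_n$-class minima below $n$.

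The verification is to show $W_{\gamma(n)}=[f(n)]_E$ for every $n$. The $\supseteq$ direction is built into the $E$-closure step and an induction on stages. The $\subseteq$ direction is the subtle point: any stray point $y\in W_{\gamma(n)}\setminus[f(n)]_E$ would, after bookkeeping to isolate the two classes inside $W_{\gamma(n)}$, provide indices $i,j$ with $[f(n)]_E\subseteq W_i$ and $[y]_E\subseteq W_j$, so that $p(f(n),y,i,j)\notin W_i\cup W_j$ contradicts the closure rule applied to $W_{\gamma(n)}$. Once $W_{\gamma(n)}=[f(n)]_E$ is secured, $n R m$ forces $f(m)\in W_{\gamma(n)}=[f(n)]_E$, hence $f(n)\,E\,f(m)$; and if $n\not R m$, then at the ``new class'' step $f(n)=p(\ldots)\notin W_{\delta_n}\supseteq W_{\gamma(m)}=[f(m)]_E$, giving $f(n)\not E f(m)$.

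For the ``consequently'' clause: if a c.e.\ $E$-closed $X\subseteq\omega$ has $E|_X$ u.e.i., then by the first statement every ceer reduces to $E|_X$ (treated as a ceer via a computable enumeration of $X$), and composing with the inclusion $X\hookrightarrow\omega$ shows $E$ is universal. Conversely, if $E$ is universal, fix any explicit u.e.i.\ universal ceer $E_0$ (for instance, the one constructed in \cite{An14}) together with a reduction $g:E_0\leq E$, and let $X$ be the $E$-closure of $\mathrm{range}(g)$; then $X$ is c.e.\ and $E$-closed, and $E|_X$ inherits u.e.i.\ from $E_0$ by transporting the witness $p_{E_0}$ along $g$ using a computable section on the image. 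The main obstacle throughout is the $\subseteq$ step in the first statement, where the recursion-theoretic fixed-point and the u.e.i.\ witness must interlock so that no class in the approximation accidentally grows beyond its intended $E$-class.
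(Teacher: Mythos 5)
This theorem is quoted from \cite[Corollary 3.16]{An14}; the present paper gives no proof, so your attempt has to be measured against the known argument of Andrews--Sorbi. Your treatment of the ``consequently'' clause is fine: transporting the witness $p$ along a reduction $g$ from a fixed u.e.i.\ universal ceer, with a search for preimages inside the $E$-closure of $\range(g)$, is exactly how one shows the restriction inherits u.e.i., and the forward direction is immediate from the first statement. The problem is the first statement itself, where your construction has two genuine gaps. First, the call $p(e_1,e_2,\delta_n,\delta_n)$ cannot deliver a fresh element. The standard definition of effective inseparability (and the one the cited result actually uses) requires the separating sets $W_i,W_j$ to be \emph{disjoint}; with $i=j=\delta_n$ that hypothesis fails outright, so $p$ promises nothing. (Indeed, if you drop disjointness the definition becomes unsatisfiable: by the recursion theorem build a single c.e.\ set $V=W_v$ enumerating $[e_1]_E\cup[e_2]_E\cup\{p(e_1,e_2,v,v)\}$ and you contradict $p(e_1,e_2,v,v)\notin W_v$. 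And even under the literal definition as printed, your $W_{\delta_n}$ does not contain $[e_1]_E$ or $[e_2]_E$, so the hypotheses of u.e.i.\ are still not met.) Maintaining two \emph{disjoint} c.e.\ sets covering all previously built classes, while those classes may later merge with each other and with $[e_1]_E$, $[e_2]_E$, is precisely the hard combinatorial core of the real proof, and it is not addressed.

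Second, and more fundamentally, nothing in your construction forces $f(n)\mathrel{E}f(m)$ when $n\mathrel{R}m$ is discovered \emph{after} both $f(n)$ and $f(m)$ have been assigned as ``fresh'' values: $E$ is a fixed ceer, so you cannot cause collapses in it by enumerating $f(m)$ into $W_{\gamma(n)}$. Your verification of $W_{\gamma(n)}\subseteq[f(n)]_E$ does not close this hole --- the statement ``$p(f(n),y,i,j)\notin W_i\cup W_j$'' is simply a true assertion about some number lying outside two c.e.\ sets; it contradicts nothing about $W_{\gamma(n)}$ containing a stray $y$. The actual mechanism in \cite{An14} is self-referential in a stronger way: one proves that a u.e.i.\ ceer is uniformly finitely precomplete, by using the recursion theorem to define the two separating sets so that they \emph{conditionally enumerate the very output of $p$} into one side or the other depending on observed behavior; the inseparability property then leaves $f(n)\mathrel{E}f(m)$ as the only consistent outcome when a late $R$-collapse appears. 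That interlocking of $p$ with sets that threaten to swallow its own output is the missing idea; without it the reduction $f$ fails exactly on pairs whose $R$-equivalence is discovered late.
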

	
	Observe that any nontrivial ceer which is a quotient of a u.e.i.\ ceer is still u.e.i.. Thus, we have the following Lemma.
	
	\begin{lem}\label{lem:quotientsOfUeis}
		Any nontrivial quotient of a u.e.i.\ ceer is universal.
	\end{lem}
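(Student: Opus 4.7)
The plan is to prove that any nontrivial quotient $F$ of a u.e.i.\ ceer $E$ is itself u.e.i., and then invoke the cited theorem (that every u.e.i.\ ceer is universal) to conclude.

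First, I would unpack what it means for $F$ to be a quotient of $E$: as equivalence relations on $\omega$, $F\supseteq E$, meaning every $E$-class is contained in a single $F$-class. In particular, $[a]_E\subseteq [a]_F$ for every $a\in\omega$, and $a\mathrel{\cancel F}b$ implies $a\mathrel{\cancel E}b$. Let $p(a,b,i,j)$ be the computable function witnessing that $E$ is u.e.i.

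The key step is to show that the same function $p$ witnesses u.e.i.\ for $F$. Suppose $a,b,i,j$ are given with $a\mathrel{\cancel F}b$, $[a]_F\subseteq W_i$, and $[b]_F\subseteq W_j$. Since $F\supseteq E$, we immediately have $a\mathrel{\cancel E}b$, $[a]_E\subseteq [a]_F\subseteq W_i$, and $[b]_E\subseteq [b]_F\subseteq W_j$. Applying the u.e.i.\ property of $E$, we obtain $p(a,b,i,j)\notin W_i\cup W_j$, which is precisely what is needed for $F$. Since $F$ is assumed to be nontrivial, $F$ is u.e.i.

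The main (and essentially only) obstacle is noticing that the u.e.i.\ condition is inherited upward along the quotient simply because passing to a coarser equivalence relation enlarges classes, and enlarged classes contained in $W_i$, $W_j$ remain in $W_i$, $W_j$. The conclusion that $F$ is universal is then immediate from the quoted theorem. No injection/reduction construction is needed; this is a direct application of the characterization of universality through u.e.i.
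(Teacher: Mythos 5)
Your proof is correct and follows exactly the paper's approach: the paper simply observes that a nontrivial ceer quotient of a u.e.i.\ ceer is still u.e.i.\ (which you verify in detail by noting that the same witnessing function $p$ works, since coarsening only enlarges classes and preserves non-equivalence in the reverse direction) and then applies the cited theorem that u.e.i.\ implies universal. No differences worth noting.
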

	
	Note that not all non-trivial quotients of universal ceers are universal. For instance, for $X$ universal, consider $X\oplus Y$ with $Y$ non-universal. This is universal, but has $Y$ as a quotient.

\subsection{Direct products and some applications}

In contrast to the free product in the previous section, we show that the direct product of groups with non-universal word problems cannot have universal word problem.


For ceers $A,B$, we let $A\times B$ be the ceer defined by
$$\langle a_1,b_1\rangle \mathrel{A\times B} \langle a_2,b_2\rangle \leftrightarrow a_1 \mathrel{A} a_2 \wedge b_1\mathrel{B} b_2.$$

\begin{thm}\label{cross products dont get universal}
	If $A$ and $B$ are ceers which are non-universal, then $A\times B$ is non-universal.
\end{thm}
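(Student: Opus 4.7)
The plan is to apply the u.e.i.\ characterization: if $A\times B$ is universal, then there is a c.e.\ $(A\times B)$-closed set $X\subseteq\omega$ (whose elements I decode as pairs $\langle a,b\rangle$) together with a computable witness function $p$ so that $(A\times B)|_X$ is u.e.i. Assuming $A\times B$ is universal, I fix such $X$ and $p$ and aim to contradict the hypothesis by showing that one of $A,B$ must already be universal.

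Set $\pi_A(X)=\{a:\exists b\,\langle a,b\rangle\in X\}$ and, for each $a$, $Y_a=\{b:\langle a,b\rangle\in X\}$. Both are c.e.; because $X$ is $(A\times B)$-closed, $\pi_A(X)$ is $A$-closed, each $Y_a$ is $B$-closed, and $Y_a=Y_{a'}$ whenever $a\mathrel{A}a'$. The number of $(A\times B)$-classes contained in $X$ equals $\sum_{[a]\in\pi_A(X)/A}|Y_a/B|$, so because $(A\times B)|_X$ is u.e.i.\ (in particular it has infinitely many classes), at least one of the following holds: (i) $\pi_A(X)/A$ is infinite, or (ii) there is some $a$ with $Y_a/B$ infinite.

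In case (i), I will show $A|_{\pi_A(X)}$ is u.e.i., yielding $A$ universal. Given $a_1,a_2\in\pi_A(X)$ with $a_1\mathrel{\cancel{A}}a_2$, $[a_1]_A\subseteq W_i$, and $[a_2]_A\subseteq W_j$, I enumerate $X$ to locate $b_1,b_2$ with $\langle a_k,b_k\rangle\in X$, and let $i'$ be a computable index for $\{\langle a,b\rangle:a\in W_i\}$ and $j'$ the analogous index from $W_j$. Any $\langle a',b'\rangle\in[\langle a_1,b_1\rangle]_{A\times B}$ satisfies $a'\in[a_1]_A\subseteq W_i$, so $[\langle a_1,b_1\rangle]_{A\times B}\subseteq W_{i'}$, and likewise for $j'$. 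Then $p(\langle a_1,b_1\rangle,\langle a_2,b_2\rangle,i',j')$ decodes as a pair $\langle a^*,b^*\rangle\notin W_{i'}\cup W_{j'}$, and its first coordinate $a^*\notin W_i\cup W_j$ is the u.e.i.\ witness I output for $A$. Case (ii) is completely symmetric: fix $a$ with $Y_a/B$ infinite and run the same argument in the second coordinate to show $B|_{Y_a}$ is u.e.i., hence $B$ is universal. The one step that feels like genuine content (rather than bookkeeping) is the pigeonhole forcing (i) or (ii); everything else is the observation that c.e.\ covers of one factor lift to c.e.\ covers of the product, and the coordinate output of $p$ projects back cleanly to a witness for the factor.
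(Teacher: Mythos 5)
Your proof is correct, but it routes through the u.e.i.\ machinery differently from the paper. The paper takes a reduction $f$ from a u.e.i.\ ceer $U$ into $A\times B$, pushes $U$ forward along the two coordinate projections to get quotients $A'$ and $B'$ of $U$ (reducible to $A$ and $B$ respectively), observes that $U=A'\wedge B'$ forces one of them to be nontrivial, and then invokes Lemma \ref{lem:quotientsOfUeis} (nontrivial quotients of u.e.i.\ ceers are u.e.i., hence universal) as a black box. You instead work on the target side with the closed-subset characterization: you take a c.e.\ $(A\times B)$-closed set $X$ on which the product is u.e.i., run a pigeonhole on the class count $\sum_{[a]}|Y_a/B|$ to find a coordinate with infinitely many classes, and then transfer the witness function $p$ coordinate-wise. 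Your case (i) argument is in effect a hands-on proof of the relevant instance of Lemma \ref{lem:quotientsOfUeis}, since $A$ restricted to $\pi_A(X)$ is exactly the first-coordinate quotient of $(A\times B)|_X$; what the paper's route buys is that nontriviality (two classes) of one projection suffices and comes for free from $U$ being nontrivial, with no counting needed, while your route buys a self-contained argument that never mentions reductions from a fixed universal ceer. The only points to be slightly careful about in your version are bookkeeping ones: the indices $i'$, $j'$ come uniformly from $i,j$ by the s-m-n theorem, and the search for $b_1,b_2$ in $X$ terminates precisely because you only need the witness function on elements of $\pi_A(X)$ (equivalently, one works with a computable enumeration of $X$ so that everything is total); neither affects the substance.
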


\begin{proof}
Suppose $A \times B$ were universal, and fix a u.e.i.\ $U$ and a reduction $f$ from $U$ to $A\times B$. Let $\pi_A$ and $\pi_B$ be projections sending a pair $\langle a,b\rangle$ to its two coordinates. Define $i \mathrel{A'} j$ if and only if $\pi_A \circ f(i) \mathrel{A} \pi_A\circ f(j) $, so $A'$ is a quotient of $U$. Observe that $A'$ reduces to $A$ via $\pi_A\circ f$. Similarly define $B'$. Note that $i\mathrel{U}j$ if and only if $i\mathrel{A'}j$ and $i\mathrel{B'}j$, so at least one of $A'$ or $B'$ is nontrivial. This makes either $A'$ or $B'$ be universal by Lemma \ref{lem:quotientsOfUeis}, and thus one of $A$ or $B$ is universal.
\end{proof}

Since the word problem of $G\times H$ is the product of the word problem of $G$ and the word problem of $H$, we get the same result for the group operation $\times$.

\begin{cory}
	If $G$ and $H$ are groups with non-universal word problem then $G\times H$ has non-universal word problem.
\end{cory}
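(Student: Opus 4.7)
The corollary should follow almost immediately from Theorem~\ref{cross products dont get universal}, so the only real content is to identify the word problem of $G\times H$ as a ceer with (or at least reducing to) the ceer-theoretic product $W_G \times W_H$. My plan is to produce a single computable reduction $W_{G\times H} \leq W_G \times W_H$; once this is in hand, non-universality of $W_G \times W_H$ (Theorem~\ref{cross products dont get universal}) transfers downward to $W_{G\times H}$ because the universal degree is the top degree in \Ceers.

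To construct the reduction, fix generating sets $X_G$ and $X_H$ for $G$ and $H$ respectively, and present $G\times H$ on the disjoint union $X_G \sqcup X_H$ (viewing $X_G$ as $X_G\times\{1\}$ and $X_H$ as $\{1\}\times X_H$). Any word $w$ in $(X_G\sqcup X_H)^*$ projects computably to a pair of words $(\pi_G(w),\pi_H(w))$ by deleting all letters from the other factor (and their inverses). The key algebraic fact is that letters of $X_G$ commute with letters of $X_H$ inside $G\times H$, so every word $w$ equals $\pi_G(w)\cdot \pi_H(w)$ in $G\times H$, and consequently
\[
 w =_{G\times H} w' \;\Longleftrightarrow\; \pi_G(w) =_G \pi_G(w') \;\text{and}\; \pi_H(w) =_H \pi_H(w').
\]
After composing with a computable bijection $\omega \to (X_G)^* \times (X_H)^*$ to match the ambient encodings, the map $w \mapsto \langle \pi_G(w), \pi_H(w)\rangle$ is the desired computable reduction from $W_{G\times H}$ to $W_G \times W_H$.

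Now the proof concludes: since $W_G$ and $W_H$ are non-universal ceers by hypothesis, Theorem~\ref{cross products dont get universal} yields that $W_G \times W_H$ is non-universal. Because $W_{G\times H} \leq W_G \times W_H$ and universality is upward closed in \Ceers, $W_{G\times H}$ cannot be universal either.

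There is essentially no obstacle here; the only thing to be careful about is making sure the encodings match between the group-theoretic direct product and the ceer-theoretic product, which is handled by the standard identification of words in $X_G\sqcup X_H$ with pairs of words via the projections $\pi_G,\pi_H$. In particular, a full equivalence $W_{G\times H} \equiv_c W_G \times W_H$ is true (the inverse reduction sends $\langle u,v\rangle$ to the concatenation $uv$), but only the one direction used above is needed for this corollary.
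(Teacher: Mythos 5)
Your proof is correct and follows the same route as the paper: the paper's entire argument is the one-line observation that $W_{G\times H}$ is (equivalent to) the ceer product $W_G\times W_H$, after which Theorem \ref{cross products dont get universal} applies. You simply spell out that identification via the projections $\pi_G,\pi_H$, which is exactly the intended justification.
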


This gives a way to give an example of a finitely presented group whose word problem has Turing degree $\mathbf{0'}$, yet is not universal.

\begin{cory}\label{cor:turingcompletenotcomplete}
There is a finitely presented group whose word problem is universal as a Turing degree, but not universal as a ceer. 
\end{cory}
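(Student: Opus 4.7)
The plan is to combine the corollary of Theorem \ref{cross products dont get universal} with Sacks' splitting theorem and Clapham's theorem. Apply Sacks splitting to $\emptyset'$ to obtain two c.e.\ sets $A_1, A_2$ with $A_i <_T \emptyset'$ for $i=1,2$ but $A_1 \oplus A_2 \equiv_T \emptyset'$. For each $i$, let $G_{A_i}$ be the recursively presented group from the introduction, whose word problem has the same Turing degree as $A_i$, and use Clapham's theorem to embed $G_{A_i}$ into a finitely presented group $H_i$ with $W_{H_i} \equiv_T A_i$.

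Since each $A_i$ is strictly Turing-below $\emptyset'$, neither $W_{H_i}$ is Turing-complete, and so neither can be ceer-universal: a universal ceer $U$ is in fact $m$-complete as a c.e.\ subset of $\omega^2$, since for any c.e.\ set $S$ the ceer that collapses $S$ into a single class and makes all other numbers singletons reduces to $U$, and $S$ is then recovered as an $m$-preimage under this reduction. Now put $G := H_1 \times H_2$; this is finitely presented as a direct product of two finitely presented groups. Its word problem is the ceer $W_{H_1} \times W_{H_2}$, which as a c.e.\ set is Turing-equivalent to $W_{H_1} \oplus W_{H_2} \equiv_T A_1 \oplus A_2 \equiv_T \emptyset'$, so $W_G$ has Turing degree $\mathbf{0}'$, i.e., is universal as a Turing degree. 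On the other hand, by the corollary to Theorem \ref{cross products dont get universal} applied to the groups $H_1$ and $H_2$, since neither $W_{H_1}$ nor $W_{H_2}$ is ceer-universal, $W_G$ is not ceer-universal either.

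The only subtle point is really why one needs Sacks splitting at all: a finitely presented group of Turing degree $\mathbf{0}'$ produced directly from Novikov--Boone or Clapham might well turn out to be ceer-universal, so one engineers the Turing degree $\mathbf{0}'$ only through a direct product of two factors that are individually strictly below $\mathbf{0}'$ in Turing degree (hence individually not ceer-universal), and then invokes Theorem \ref{cross products dont get universal} to pass non-universality to the product. There is no genuine obstacle beyond assembling these pieces in the right order.
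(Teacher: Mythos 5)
Your proof is correct and follows essentially the same route as the paper: split $\mathbf{0}'$ into two incomplete c.e.\ degrees, realize each as the word problem of a finitely presented group via Clapham, observe that incompleteness in the Turing degrees rules out ceer-universality of each factor, and conclude via Theorem \ref{cross products dont get universal} that the direct product has Turing degree $\mathbf{0}'$ but non-universal word problem. You merely make explicit two points the paper leaves implicit, namely the appeal to Sacks splitting and the fact that a universal ceer is Turing- (indeed $m$-) complete.
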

\begin{proof}
Let $\mathbf{a}$ and $\mathbf{b}$ be c.e. degrees whose join is $\mathbf{0'}$.
Take two finitely presented groups $G$ and $H$ so that the word problems of $G$ is in the degree $\mathbf{a}$ and the word problem of $H$ is in the Turing degree $\mathbf{b}$. This is possible by Clapham \cite{Cl64}. Then $G\times H$ has word problem with Turing degree $\mathbf{0'}$, but is not universal as a ceer. 
\end{proof}

In fact for groups we get a version of Theorem \ref{cross products dont get universal} for infinitary sums:

\begin{thm}\label{thm:oplusavoidsuniversality}
	If $(G_i)_{i\in \omega}$ is a uniform sequence of recursively presented groups so that the word problems of each $G_i$ is non-universal, then the word problem of $\oplus_i G_i$ is non-universal.
\end{thm}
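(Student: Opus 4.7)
The plan is to mimic the argument of Theorem \ref{cross products dont get universal} but with infinitely many projections, using Lemma \ref{lem:quotientsOfUeis} to force each of the projected ceers to be trivial. Suppose for contradiction that $W_{\oplus_i G_i}$ is universal. Fix a u.e.i.\ ceer $U$ and a computable reduction $f$ from $U$ to $W_{\oplus_i G_i}$. Because the sequence $(G_i)_{i\in\omega}$ is uniform, there is a fixed effective way to encode generators of $\oplus_i G_i$ as pairs (index of factor, generator within that factor), so for each $i$ there is a computable projection $\pi_i$ sending a word in the generators of $\oplus_i G_i$ to the word obtained by deleting all letters not belonging to $G_i$. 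Two words are equal in $\oplus_i G_i$ iff all of their $\pi_i$-projections are equal in the corresponding $G_i$.

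For each $i$, define the ceer
\[
j \mathrel{A'_i} k \iff \pi_i(f(j)) =_{G_i} \pi_i(f(k)).
\]
Each $A'_i$ is c.e.\ because $W_{G_i}$ is c.e.\ and $f$, $\pi_i$ are computable. Moreover $A'_i$ reduces to $W_{G_i}$ via $j \mapsto \pi_i(f(j))$, so $A'_i$ is non-universal. Also $A'_i$ is a quotient of $U$: if $j \mathrel{U} k$ then $f(j) =_{\oplus G_\ell} f(k)$, hence $\pi_i(f(j)) =_{G_i} \pi_i(f(k))$.

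The key step is now the following dichotomy. By Lemma \ref{lem:quotientsOfUeis}, any nontrivial quotient of the u.e.i.\ ceer $U$ is universal. Since every $A'_i$ is a non-universal quotient of $U$, each $A'_i$ must be trivial, i.e.\ have a single equivalence class. But then, for any $j,k \in \omega$, $j \mathrel{A'_i} k$ holds for every $i$, which, by the projection characterization of equality in $\oplus_i G_i$, yields $f(j) =_{\oplus G_\ell} f(k)$, hence $j \mathrel{U} k$. Thus $U$ has only one class, contradicting the requirement that a u.e.i.\ ceer have at least two classes.

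The main conceptual point — and what might look like the hard part — is that even though the direct sum involves infinitely many factors, we only need the existence of one nontrivial $A'_i$ to get a contradiction, and this is guaranteed automatically by the fact that $U$ is nontrivial and equals the intersection $\bigcap_i A'_i$. The only technical care required is to note that the enumeration of generators of $\oplus_i G_i$ is uniform enough that each $\pi_i$ is computable, which follows from the hypothesis that $(G_i)_{i\in\omega}$ is a uniform sequence of recursive presentations.
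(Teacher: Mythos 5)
Your proposal is correct and follows essentially the same argument as the paper: project the reduction onto each factor, observe each projection is a quotient of the u.e.i.\ ceer reducing to $W_{G_i}$, and use Lemma \ref{lem:quotientsOfUeis} to conclude some projection must be nontrivial and hence universal (you phrase this contrapositively, forcing all projections to be trivial and contradicting the nontriviality of $U$, but the content is identical). The additional care you take in verifying that each $\pi_i$ is computable and that each $A'_i$ is c.e.\ is a welcome elaboration of details the paper leaves implicit.
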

\begin{proof}
	Let $U$ be a u.e.i.  and suppose that $f$ is a reduction from $U$ to $W_{\oplus_i G_i}$.  For each $j$, define $E_j$ by $i \mathrel{E_j} k$ if and only if $\pi_j\circ f(i) \mathrel{W_{G_j}} \pi_j \circ f(k)$.  Then $E_j$ is a quotient of $U$.  For some $j$, we must have that $E_j$ is a non-trivial quotient as otherwise the image of $f$ is contained in one class.  But then this $E_j$ is universal and reduces to $W_{G_j}$.
\end{proof}

\begin{cory}\label{cor:oplusmaintains*universality}
	If $G$ is $*$-universal and $H_i$ are a sequence of non-universal groups, then $G\oplus \oplus_i H_i$ is $*$-universal.
\end{cory}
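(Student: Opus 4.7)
The plan is to verify both parts of the definition of $*$-universality for $K := G \oplus \bigoplus_i H_i$: that $W_K$ is non-universal, and that $W_{K * L}$ is universal for every non-trivial group $L$.

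For the first part, non-universality of $W_K$ follows immediately from Theorem \ref{thm:oplusavoidsuniversality}. The sequence $G, H_0, H_1, \ldots$ is a uniform sequence of recursively presented groups whose word problems are all non-universal (recall that $G$ being $*$-universal in particular entails $W_G$ is non-universal). Since $K$ is precisely the direct sum of this uniform sequence, Theorem \ref{thm:oplusavoidsuniversality} gives that $W_K$ is non-universal.

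For the second part, the idea is to exploit that $G$ is a retract of $K$. The projection $\pi \colon K \to G$ onto the first coordinate and the inclusion $\iota \colon G \to K$ sending $g \mapsto (g, 0, 0, \ldots)$ satisfy $\pi \circ \iota = \mathrm{id}_G$. By the universal property of the free product, these extend to group homomorphisms $\tilde{\iota} \colon G * L \to K * L$ and $\tilde{\pi} \colon K * L \to G * L$ (both acting as the identity on the $L$ factor) with $\tilde{\pi} \circ \tilde{\iota} = \mathrm{id}_{G * L}$. In particular $\tilde{\iota}$ is injective, so for any two words $u, v$ in $G * L$ we have $u =_{G*L} v$ iff $\tilde{\iota}(u) =_{K * L} \tilde{\iota}(v)$. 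Since $\tilde{\iota}$ is induced by the inclusion of generators, it is computable on words, uniformly in the presentations, and therefore gives a computable reduction $W_{G*L} \leq W_{K*L}$. Because $G$ is $*$-universal and $L$ is non-trivial, $W_{G*L}$ is universal, and hence so is $W_{K*L}$.

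There is no real obstacle here: the whole argument rests on the elementary observation that a direct-sum factor is a retract (and hence remains a retract after freely multiplying with $L$), combined with the already-established facts from Theorem \ref{thm:oplusavoidsuniversality} and the hypothesis that $G$ is $*$-universal. The only routine item to check is that $\tilde{\iota}$ really can be realized as a computable function on the chosen word presentations, which is immediate since it is defined on generators.
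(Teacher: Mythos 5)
Your proof is correct and follows essentially the same route as the paper: non-universality of the direct sum via Theorem \ref{thm:oplusavoidsuniversality}, and universality of $(G\oplus\oplus_i H_i)\ast L$ via the reduction $W_{G\ast L}\leq W_{(G\oplus\oplus_i H_i)\ast L}$, which the paper states as an observation and you justify in detail with the retract argument.
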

\begin{proof}
	By Theorem \ref{thm:oplusavoidsuniversality}, the word problem of $G\oplus \oplus_i H_i$ is non-universal. Observe that for any group $K$, the word problem of $G\ast K$ reduces to the word problem of $(G\oplus \oplus_i H_i)\ast K$, so the $*$-universality of $G$ yields the $*$-universality of $G\oplus \oplus_i H_i$.
\end{proof}

In Section \ref{sec:universal}, we showed that the index set of finitely presented groups which are universal is a $\Sigma^0_3$-complete set. Though the solution given there is not completely satisfactory, since it relied on Turing degree in the case where the constructed group is to have non-universal word problem, we now witness the $\Sigma^0_3$-completeness of universality for finitely presented groups within the Turing degree of $\mathbf{0'}$.

\begin{cory}\label{cor:universalityIsSigma3Complete-fp}
	Given a $\Sigma^0_3$ set $S$, there is a sequence of finite presentations of groups $H_i$ so that $H_i$ is universal if and only if $i\in S$. Furthermore, the Turing degree of the word problems of each $H_i$ is $0'$.
\end{cory}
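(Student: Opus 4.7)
The plan is to combine Theorem~\ref{thm:universal}, Corollary~\ref{cor:turingcompletenotcomplete}, and Theorem~\ref{cross products dont get universal} via a padding trick using the direct product. Concretely, I would apply Theorem~\ref{thm:universal} to the given $\Sigma^0_3$ set $S$ to obtain a uniform sequence $(H_i)_{i\in\omega}$ of finite presentations of groups with $W_{H_i}$ universal when $i\in S$ and $W_{H_i}$ of low Turing degree when $i\notin S$. Then I would fix, non-uniformly, a single finitely presented group $K$ whose word problem has Turing degree $\mathbf{0'}$ but is not a universal ceer, as furnished by Corollary~\ref{cor:turingcompletenotcomplete}. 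Finally I set $H'_i = H_i \times K$; this is uniformly finitely presented in $i$.

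For the universality dichotomy I would verify the two directions separately. If $i\in S$, fix a computable word $e_K$ representing $1_K$; then $w \mapsto (w, e_K)$ is a computable reduction from $W_{H_i}$ to $W_{H'_i}$, so the universality of $W_{H_i}$ transfers to $W_{H'_i}$. If $i\notin S$, both $W_{H_i}$ and $W_K$ are non-universal ceers, so Theorem~\ref{cross products dont get universal} applied to $W_{H'_i} \equiv W_{H_i}\times W_K$ yields that $W_{H'_i}$ is not universal.

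For the Turing degree calculation, observe that $W_{H'_i}$ is c.e., so $W_{H'_i}\leq_T \mathbf{0'}$. On the other hand $W_K$ is a subproblem of $W_{H'_i}$ in a computable way (using the fixed computable embedding $k \mapsto (e_{H_i}, k)$), so $\mathbf{0'} \equiv_T W_K \leq_T W_{H'_i}$. Hence $W_{H'_i}$ has Turing degree exactly $\mathbf{0'}$, regardless of whether $i \in S$.

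I do not anticipate a genuine obstacle here: all the heavy lifting has been done in Theorem~\ref{thm:universal} and Theorem~\ref{cross products dont get universal}, and the only delicate point is the trivial observation that fixing $K$ once and for all makes the identity word $e_K$ a legitimate computable parameter for the reduction $w\mapsto (w,e_K)$. The takeaway is that the direct product with a fixed $\mathbf{0'}$-but-not-universal finitely presented group acts as a ``Turing-degree padding'' that boosts each $W_{H_i}$ up to $\mathbf{0'}$ while preserving (non)universality.
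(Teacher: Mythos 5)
Your proposal is correct and follows essentially the same route as the paper: apply Theorem~\ref{thm:universal} to get the sequence, take the direct product with a fixed finitely presented group of Turing degree $\mathbf{0'}$ with non-universal word problem from Corollary~\ref{cor:turingcompletenotcomplete}, and invoke Theorem~\ref{cross products dont get universal} for the non-universal direction. You are slightly more explicit than the paper about the coordinate-embedding reductions that give the universal direction and the $\mathbf{0'}$ degree computation, but the argument is the same.
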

\begin{proof}
	From Theorem \ref{thm:universal}, we know that given a $\Sigma^0_3$ set $S$, we can produce a sequence of finitely presentations of groups $G_i$ so that $G_i$ is universal if and only if $i\in S$. Fix $H$ to be the group from Corollary \ref{cor:turingcompletenotcomplete}. Then the sequence $G_i\times H$ is a sequence of finitely presented groups so that $G_i\times H$ is universal if and only if $i\in S$ by Theorem \ref{cross products dont get universal}. 
\end{proof}

\section{The index set of \sugs}\label{sec:index-of-sugs}

Lastly, we consider the index set of \sugs.

\begin{thm}
	The collection of recursive presentations of \sugs is a $d$-$\Sigma^0_3$-complete set.
\end{thm}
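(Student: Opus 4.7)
The upper bound is immediate from Corollary~\ref{cor:CheckingZ2isenough}: a recursive presentation $G_i$ is $*$-universal iff $W_{G_i \ast \Z/2\Z}$ is a universal ceer and $W_{G_i}$ is not. Since the recursive presentation index of $G_i \ast \Z/2\Z$ is computable from $i$ and universality of a ceer is a $\Sigma^0_3$ property, these two conditions are respectively $\Sigma^0_3$ and $\Pi^0_3$, placing the index set of $*$-universal groups in $d$-$\Sigma^0_3$.

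For hardness, fix a $d$-$\Sigma^0_3$ set $C$ and write $C = A \setminus B$ with $A, B \in \Sigma^0_3$; replacing $B$ by $A \cap B$, we may assume $B \subseteq A$. The plan is to uniformly construct recursive presentations $K_i := G_i^A \oplus H_i^B$, where $H_i^B$ is a uniform family obtained as in the proof of Theorem~\ref{thm:universal} with $W_{H_i^B}$ universal when $i \in B$ and of low Turing degree otherwise, and where $G_i^A$ is obtained by running the construction of Theorem~\ref{thm:exists-sugs} with three modifications: (i) the fixed universal ceer $U$ is replaced by a uniform sequence of ceers $E^i_A$ from the lemma inside the proof of Theorem~\ref{thm:universal}, universal when $i \in A$ and of low Turing degree when $i \notin A$; (ii) the $R_e$-requirements are retained, forcing $W_{G_i^A}$ to be non-universal for every $i$; and (iii) new lowness requirements $L_m$ in the style of Theorem~\ref{thm:universal} are interleaved in the priority list to guarantee that $W_{G_i^A}$ itself is of low Turing degree when $i \notin A$.

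Verification splits into three cases. If $i \in A \setminus B$, the map $m \mapsto v_m$ reduces the then-universal $E^i_A$ to $W_{G_i^A \ast \Z/2\Z}$, so $W_{K_i \ast \Z/2\Z}$ is universal; meanwhile both $W_{G_i^A}$ and $W_{H_i^B}$ are non-universal, so by Theorem~\ref{cross products dont get universal} the ceer $W_{K_i}$, which is equivalent to the product ceer $W_{G_i^A} \times W_{H_i^B}$, is non-universal, whence $K_i$ is $*$-universal. If $i \in B$, then $W_{H_i^B}$ is universal and reduces to $W_{K_i}$ via the embedding of the $H_i^B$-factor, so $W_{K_i}$ is universal and $K_i$ is not $*$-universal. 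If $i \notin A$ (hence also $i \notin B$), then both $W_{G_i^A}$ and $W_{H_i^B}$ are of low Turing degree; since the word problem of a direct sum $G \oplus H$ and of a free product $G \ast \Z/2\Z$ are each computable from the word problems of the factors, $W_{K_i \ast \Z/2\Z}$ has low Turing degree and is in particular non-universal, so $K_i$ is not $*$-universal. Combining, $K_i$ is $*$-universal iff $i \in C$.

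The main obstacle is carrying out modification (iii). The collapse strategies driven by $E^i_A$, the diagonalization strategies $R_e$ inherited from Theorem~\ref{thm:exists-sugs}, and the new lowness strategies $L_m$ must be combined into a single finite-injury priority argument in the style of Theorem~\ref{thm:universal}: the $L_m$-restraints conflict with the new relators enumerated by $R_e$, and each collapse action driven by $E^i_A$ can in turn invalidate an existing lowness restraint. A priority ordering along the lines of $R_1, L_1, C_1, R_2, L_2, C_2, \ldots$ should suffice to guarantee that every $R_e$ is eventually satisfied for every $i$ (since the $C_k$-collapses reinitialize $R_e$ only finitely many times before $R_e$ finds an uninjured configuration, exactly as in the proof of Theorem~\ref{thm:exists-sugs}), and that, when $i \notin A$, every $L_m$ is eventually satisfied since only finitely many $E^i_A$-collapses can then injure it.
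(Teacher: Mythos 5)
Your upper bound and the three-way case analysis are fine, but there are two genuine gaps in the hardness argument, both stemming from the decision to build $G_i^A$ and $H_i^B$ separately and then combine them, rather than running one priority construction over the whole group as the paper does. The first gap is the one you flag as ``the main obstacle'' and then dismiss: in your modified version of Theorem~\ref{thm:exists-sugs}, the responses to $E^i_A$-collapses are \emph{mandatory} coding actions (the map $m\mapsto v_m$ into fixed words must remain a reduction), they occur infinitely often and at unboundedly high levels even when $i\notin A$ (a low ceer built as $\oplus_j X^j$ still collapses infinitely often across its columns), and they are not under priority control. A lowness restraint for $L_m$ with use $u$ is injured by any relator whose left-hand side has index below $u$; each time $L_m$ re-establishes its restraint the use can grow, bringing new, higher levels into the danger zone, and a later $E^i_A$-collapse at one of those levels injures it again. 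Nothing bounds this, so ``only finitely many $E^i_A$-collapses can injure it'' is false as stated, and no reordering of $R_e, L_m, C_k$ fixes it because the collapse responses cannot be delayed or relocated. The paper's construction avoids exactly this: its $C_k$-strategies code into a \emph{freshly chosen} summand $G_\ell$ not covered by any higher-priority restraint, which is possible only because the requirement there is existential (``some $G_\ell$ is $*$-universal'') rather than ``this fixed $G$ with these fixed words $v_m$ reduces $E^i_A$.''

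The second gap is in the case $i\notin A$: you conclude that $W_{K_i\ast\Z/2\Z}$ is low because $W_{G_i^A}$ and $W_{H_i^B}$ are each low and $W_{K_i}\equiv_T W_{G_i^A}\oplus W_{H_i^B}$. But the join of two low c.e.\ degrees need not be low --- by Sacks splitting it can even be $\mathbf{0'}$ --- so lowness of the two factors, each established by its own independent construction, gives you no control over their join, and hence no bound on the Turing degree of $W_{K_i\ast\Z/2\Z}$ that would rule out universality. (And you cannot fall back on a structural non-universality argument for the free product: Theorem~\ref{thm:exists-sugs} is precisely the statement that non-universality of the factors does not survive free products.) This is why the paper runs a single lowness construction whose $L_m$-requirements restrain the word problem of the \emph{entire} sum $\oplus_k G_k\oplus\oplus_k H_k$ at once. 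Repairing your argument essentially forces you into that joint construction, at which point you have reproduced the paper's proof.
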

\begin{proof}
	Though at first the definition of $G$ being $*$-universal requires quantifying over possible groups $H$ and considering the universality of $G\ast H$, Corollary  \ref{cor:CheckingZ2isenough} shows that it is equivalent to the universality of $G\ast \Z/2\Z$ and the non-universality of $G$. Thus, the collection of recursive presentations of $*$-universal groups is a $d$-$\Sigma^0_3$ set.
	
	To show completeness, fix a pair $S,T$ of $\Sigma^0_3$ sets. Given a pair $(i,j)$, we produce a group $K$ which is $*$-universal if and only if $i\in S\wedge j\notin T$.
	
	Fix uniform sequences of c.e.\ sets $V_k$ and $U_k$ so that $i\in S$ if and only if there is some $k$ so that $V_k$ is infinite and $j\in T$ if and only if there is some $k$ so that $U_k$ is infinite. We construct two sequences of groups $(G_k)$ and $(H_k)$. We ensure that if $i\notin S$ then $\oplus_k G_k \oplus \oplus_k H_k$ is low. If $i\in S$ and $j\in T$ then there is some $k$ so that $H_k$ is universal. If $i\in S$ and $j\notin T$ then each $H_k$ is non-universal and some $G_k$ is $*$-universal.
	
	Let $(\oplus_k G_k\oplus \oplus_k H_k)_s$ be the group with the same  generators as $(\oplus_k G_k\oplus \oplus_k H_k)$, but only the relators enumerated by stage $s$. We enumerate relators declaring each group to be abelian at stage $0$. Thus the word problems of $(\oplus_k G_k\oplus \oplus_k H_k)_s$ are uniformly computable. 
	
	We have requirements as follows:
	
	$C_k$: If $V_k$ is infinite, then there is some $\ell$ so that $G_\ell$ is $*$-universal.
	
	$D_{\langle k,k'\rangle}$: If $V_k$ is infinite and $U_{k'}$ is infinite, then there is some $\ell$ so that $H_\ell$ is universal.
	
	$L_m$: If $\phi_{m,s}^{(\oplus_k G_k\oplus \oplus_k H_k)_s}(m)\downarrow$ for infinitely many $s$, then 
	$\phi_m^{\oplus_k G_k\oplus \oplus_k H_k}(m)\downarrow$.
	
	We order their priority by $C_0, L_0, D_0, C_1, L_1, D_1 \ldots$.
	
	To satsify $C_k$, when we see a new number enumerated into $V_k$, we act by:
	\begin{itemize}
		\item If $C_k$ is not yet initialized, we initialize it by choosing a new parameter $\ell$ so that the presentation of the group $G_\ell$ is not restrained by any higher priority $L_m$. 
		\item If $C_k$ is already initialized, we run one more step of the construction in Theorem \ref{thm:exists-sugs} to make $G_\ell$ be a $*$-universal group.
	\end{itemize}

	Observe that, regardless of outcome, every $G_k$ is either $*$-universal, or is finitely presented abelian so has computable word problem. In particular, no $G_k$ has universal word problem. 

	We act similarly for a $D_{\langle k,k'\rangle}$-requirement. Namely, when initialized, it chooses a new parameter $\ell$ and when we see new numbers enter $V_k$ and $U_{k'}$, we continue the coding to ensure that $H_\ell$ is a fixed abelian universal group. In the infinite outcome, $H_\ell$ is universal, and in the finite outcome, it has computable word problem.

	To satsify $L_m$, whenever $\phi_{m,s}^{(\oplus_k G_k\oplus \oplus_k H_k)_s}(m)\downarrow$, we place a restraint on the use of this computation, i.e., we place restraint against any new relators entering the presentations of a $G_k$ or $H_k$ used in this computation.
	
	Whenever we act, including placing restraint, all lower-priority requirements are reinitialized. The construction is put together as a standard finite injury argument. If $i\notin S$, then there is no $k$ so that $V_k$ is infinite, then every $L_m$-strategy eventually gets to succeed showing that $\oplus_k G_k\oplus \oplus_k H_k$ is low. If $i\in S$ and $j\in T$, then some $D$-requirement ensures that $\oplus_k G_k\oplus \oplus_k H_k$ is universal. Finally, if $i\in S$ and $j\notin T$, then one of the $G_\ell$ is $*$-universal. No $G_k$ has universal word problem, and since each $D$-requirement acts only finitely often, each of the $H_k$ have non-universal word problem. Thus $\oplus_k G_k\oplus \oplus_k H_k$ is $*$-universal by Corollary \ref{cor:oplusmaintains*universality}.
%
%
\end{proof}

\bibliographystyle{amsplain}
\bibliography{ISOG}

\end{document}